\newtheorem{theorem}{Theorem}[section]
\newtheorem{lemma}{Lemma}[section]
\newtheorem{definition}{Definition}[section]
\newcommand{\flap}{\ensuremath{(-\Delta)^s}}
\newcommand{\flapp}{\ensuremath{(\widetilde{-\Delta})^s}}
\newcommand{\R}{\ensuremath{\mathbb{R}}}
\newcommand{\I}{\ensuremath{\mathcal{I}}}
\renewcommand{\S}{\ensuremath{\mathcal{S}}}
\newcommand{\N}{\ensuremath{\mathbb{N}}}
\newcommand{\Z}{\ensuremath{\mathbb{Z}}}
\newcommand{\y}{\ensuremath{\mathbf y}}
\newcommand{\x}{\ensuremath{\mathbf x}}
\renewcommand{\u}{\ensuremath{\mathbf u}}
\newcommand{\f}{\ensuremath{\mathbf f}}
\newcommand{\id}{\ensuremath{\mathbf{1}}}
\newcommand{\eps}{\varepsilon}
\newcommand{\dom}{\mathop{d\omega}}
\newcommand{\FT}{\ensuremath{\mathop{\mathcal F}}}
\newcommand{\IFT}{\ensuremath{{}\mathcal F^{-1}}}
\renewcommand{\i}{\mathrm{i}}
\newcommand{\e}{\mathrm{e}}
\DeclareMathOperator{\supp}{supp}
\DeclareMathOperator{\sinc}{sinc}
\DeclareMathOperator{\DFT}{DFT}
\DeclareMathOperator{\IDFT}{IDFT}
\numberwithin{equation}{section}
\title{Approximation of Integral Fractional Laplacian and Fractional PDEs via sinc-Basis}
\author{Harbir Antil}
\address{Department of Mathematical Sciences and the Center for Mathematics and Artificial Intelligence (CMAI), George Mason University, Fairfax, VA 22030, USA.}
\email{hantil@gmu.edu}
\author{Patrick Dondl \and Ludwig Striet}
\address{Abteilung für Angewandte Mathematik, Albert-Ludwigs-Universität Freiburg, Hermann-Herder-Straße 10, 79104 Freiburg i. Br. }
\email{patrick.dondl@mathematik.uni-freiburg.de, ludwig.striet@mathematik.uni-freiburg.de}
\thanks{LS gratefully acknowledges a doctoral scholarship from the Friedrich-Ebert-Stiftung. HA is partially supported by NSF grants DMS-1818772, DMS-1913004, the Air Force Office of Scientific Research (AFOSR) under Award NO: FA9550-19-1-0036, and Department of Navy, Naval PostGraduate School under Award NO: N00244-20-1-0005.}
\begin{document}

\maketitle

\begin{abstract}
	Fueled by many applications in random processes, imaging science, geophysics, etc., fractional Laplacians have recently received significant attention. The key driving force behind the success of this operator is its ability to capture non-local effects while enforcing less smoothness on functions. In this article, we introduce a spectral method to approximate this operator employing a sinc basis. Using our scheme, the evaluation of the operator and its application onto a vector has complexity of $\mathcal O(N\log(N))$ where $N$ is the number of unknowns. Thus, using iterative methods such as Conjugate Gradient (CG), we provide an efficient strategy to solve fractional partial differential equations with exterior Dirichlet conditions on arbitrary Lipschitz domains. Our implementation works in both $2d$ and $3d$. We also recover the FEM rates of convergence on benchmark problems. For fractional exponent $s=1/4$ our current 3d implementation can solve the  Dirichlet problem with $5\cdot 10^6$ unknowns in under 2 hours on a standard office workstation. We further illustrate the efficiency of our approach by applying it to fractional Allen-Cahn and image denoising problems. 
\end{abstract}

\section{Introduction}
\label{sec:introduction}

This article is concerned with the numerical treatment of equations of the form
\begin{align}\label{eq:bvp}
\begin{aligned}
 	\flap u &= f \quad \text{ in } \Omega \\
 		u &= 0 \quad \text{ in }\R^d\setminus\Omega,
\end{aligned} 		
\end{align}
on an open bounded domain $\Omega \subset \R^d$ with Lipschitz boundary
$\partial\Omega$. Furthermore, we consider applications to phase field models and imaging science.

A standard way to define the fractional Laplacian is via a principal value integral on functions in the Schwartz space $\S(\R^d)$ of rapidly decaying functions.
 \begin{definition}
 	\label{def:flap_int}
	 Let $u \in \S(\R^d)$ and $s\in (0,1)$. The operator $\flap$ is defined as
 	\begin{align}
		\flap u(x) &= C(d,s) P.V.\int\limits_{\R^d} \frac{u(x)-u(y)}{|x-y|^{d+2s}}\mathop{dy} \label{eq:flap_integral} \text{ where } C(d,s) = \frac{s 2^{2s} \Gamma(s + d/2)}{\pi^{d/2} \Gamma(1-s)}
 	\end{align}
 	is a normalization constant.
 \end{definition}
 This definition clearly shows the non-local character of the operator \flap. In order to evaluate $\flap u(x)$ at a single point $x\in\R^d$, one has to evaluate the singular integral over the full space $\R^d$. Furthermore, this fractional Laplacian only makes sense for functions which are defined on all of $\R^d$. The definition can be interpreted in a weak sense for regular distributions with suitable growth conditions via integration by parts \cite[Chapter 2]{SilvestrePhD}. We will from now on use this interpretation implicitly when needed.
 
 For the remainder of this paper, we will
refer to problems of the form \cref{eq:bvp} as the Dirichlet problem for the fractional
Laplacian, or -- more explicitly -- as the fractional Poisson problem with Dirichlet exterior conditions.  
The fractional Laplacian
 of a function $u$ with support in $\Omega \subset \R^d$, $\Omega$ bounded, or, equivalently, of a function $u\colon\Omega\to\R$ which is extended by zero outside $\Omega$ will be denoted as the Dirichlet fractional Laplacian. We remark that other options to give meaning to fractional operators applied to functions defined on bounded domains exist (for an overview, see \cite{Lischke2020}), but here, we will be concerned with the Dirichlet problem as described above. 

 On the entire $\R^d$, there are at least nine further, equivalent ways to define the fractional Laplacian, see \cite{Kwasnicki2015}. A particularly useful one is the one defined using Fourier transform. See for example \cite{DiNezza2012} for a proof of the following

 \begin{theorem} Let $s \in(0,1)$ and let $\flap : \S(\R^d) \rightarrow L^2(\R^d)$
 	the fractional Laplacian from \cref{def:flap_int}. Then, for $u\in\mathcal S$,
 	\begin{equation}
 		\label{eq:flap_ft}
 		\flap u = \IFT\left( |\omega|^{2s} (\FT u) \right).
 	\end{equation}
 \end{theorem}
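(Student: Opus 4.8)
The plan is to establish the Fourier characterization by directly computing the Fourier transform of the singular integral in Definition~\ref{def:flap_int}. The key idea is that the fractional Laplacian is a translation-invariant operator, so it must act as a Fourier multiplier; the entire task reduces to identifying that multiplier as $|\omega|^{2s}$.

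\textbf{Setup via the singular integral.} First I would rewrite the principal value integral in a symmetrized form that removes the singularity. For $u \in \S(\R^d)$, substituting $z = y - x$ gives
\begin{equation*}
\flap u(x) = -\frac{C(d,s)}{2} \int_{\R^d} \frac{u(x+z) + u(x-z) - 2u(x)}{|z|^{d+2s}} \dx[z].
\end{equation*}
The second-order difference $u(x+z)+u(x-z)-2u(x)$ vanishes to second order as $z \to 0$, which (together with the decay of $u$ at infinity) makes this integral absolutely convergent for $s \in (0,1)$, so the principal value is no longer needed. This reformulation is the crucial preparatory step, since it is what lets me interchange the integral with the Fourier transform.

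\textbf{Applying the Fourier transform.} Next I would apply $\FT$ to both sides and pull it inside the $z$-integral, justified by Fubini's theorem using the integrable majorant just obtained. Using the translation rule $\FT(u(\cdot + z))(\omega) = \e^{\i \omega \cdot z}\,\FT u(\omega)$, the transform of the second-order difference factors as $\left( \e^{\i \omega \cdot z} + \e^{-\i \omega \cdot z} - 2 \right)\FT u(\omega) = \left( 2\cos(\omega \cdot z) - 2 \right)\FT u(\omega)$. This yields
\begin{equation*}
\FT\!\left(\flap u\right)(\omega) = C(d,s)\left( \int_{\R^d} \frac{1 - \cos(\omega \cdot z)}{|z|^{d+2s}} \dx[z] \right) \FT u(\omega),
\end{equation*}
so the multiplier is the bracketed integral, call it $A(\omega)$.

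\textbf{Evaluating the multiplier — the main obstacle.} The hard part is showing $A(\omega) = C(d,s)^{-1}|\omega|^{2s}$, i.e.\ that the remaining integral equals $|\omega|^{2s}$ up to exactly the normalization constant. First I would exploit scaling and rotational invariance: substituting $z \mapsto z/|\omega|$ and rotating so that $\omega/|\omega|$ aligns with a fixed axis shows $A(\omega) = |\omega|^{2s} A(e_1)$, which cleanly produces the correct power $|\omega|^{2s}$ and reduces everything to the single dimensionless constant $A(e_1) = \int_{\R^d} \frac{1-\cos(z_1)}{|z|^{d+2s}}\dx[z]$. The genuinely delicate computation is evaluating this constant and verifying it matches $C(d,s)^{-1}$; I expect this to require passing to polar or spherical coordinates, handling the angular integral over the sphere $S^{d-1}$, and identifying the radial integral with a Beta/Gamma function value. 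Since the paper only asks us to recall this result and cites \cite{DiNezza2012} for the proof, I would present the reduction to $A(\omega) = |\omega|^{2s} A(e_1)$ in full and then either quote the explicit evaluation of $A(e_1)$ or defer it to the cited reference, noting that the factor $C(d,s)$ in Definition~\ref{def:flap_int} is precisely chosen so that $C(d,s)\,A(e_1) = 1$.
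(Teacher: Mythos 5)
Your proposal is correct, and it is essentially the approach the paper relies on: the paper gives no proof of this theorem at all, deferring to \cite{DiNezza2012}, and your argument (symmetrizing the principal value into a second-order difference, pulling the Fourier transform inside via Fubini, reducing the multiplier to a single constant by scaling and rotation invariance, and identifying that constant with $C(d,s)^{-1}$ via a Gamma-function computation) is precisely the standard proof given in that cited reference. The only point to tighten is the Fubini step, where you need a majorant that is jointly integrable in $(x,z)$ -- e.g.\ bounding the second difference by $|z|^2\sup_{|h|\le|z|}|D^2u(x+h)|$ near $z=0$ and by $|u(x+z)|+|u(x-z)|+2|u(x)|$ for large $|z|$ -- rather than the pointwise-in-$x$ bound that guarantees absolute convergence of the $z$-integral alone.
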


In principle, \cref{eq:flap_ft} gives a direct method to solve
\cref{eq:bvp} via the Fourier transformation. This is, of course, numerically intractable
as it is not possible to perform a discrete Fourier transformation (DFT) on an infinite domain.
On the other hand, if one truncates the domain on which the Fourier transformation is performed, one instead obtains the fractional Laplacian of a function $u$ that was periodically extended outside the truncation. We note that this is a different operator than the Dirichlet fractional Laplacian that we will call periodic fractional Laplacian, denoted $\flapp$.

Fractional partial differential equations (PDEs) of type \cref{eq:bvp} have recently received 
a significant amount of attention. The interest in fractional operators of type 
$(-\Delta)^s$ with $s \in (0,1)$ stems from two facts: (i) these operators impose less 
smoothness (cf.\ the classical case $s=1$); (ii) even more importantly, these operators easily
enable nonlocal interaction, recall that the classical derivatives lead to local operators.

The fractional Laplacian has been used as a regularizer in imaging \cite{AntilBartels2017,Antil2019}. 
Additionally, it can be derived using the so called long jump random walk \cite{EValdinoci_2009a}. 
Fractional diffusion-reaction equations such as fractional Allen-Cahn have been
studied in \cite{Akagi2015, Alberti:1994ui, Savin:2012fl, AntilBartels2017},
and a fractional Cahn-Hilliard equation has been studied e.g. in \cite{Mao2017,
Akagi2015, AntilBartels2017}. A related example where, instead of the
space-derivative the time-derivative is taken to fractional order in a
diffusion-reaction equation is e.g. \cite{Jin2015}.
Finally, the fractional Helmholtz equation has been recently derived in \cite{weiss2020fractional} using 
the first principle arguments in conjunction with a constitutive relation. We emphasize that 
the fractional Laplacian in \cite{weiss2020fractional} is of the so-called spectral type. 
From an optimal control point of view, fractional operators provide a great deal of flexibility,
since the condition $u = 0$ is imposed in the exterior $\R^d\setminus \Omega$ of the domain $\Omega$.
Therefore it has been possible to introduce a new type of optimal control (called Exterior Control) using fractional PDEs \cite{Antil2018b}.

It has been noted in \cite{Caffarelli2006} that -- in $\R^d$ -- the definitions \cref{eq:flap_integral} and \cref{eq:flap_ft} 
are further equivalent to the so called extension problem. However, even for functions $u$ such that $\supp u \subset \Omega$, with $\Omega$ bounded, the extension has to be performed on $\R^d\times (0,\infty)$ to compute the fractional Laplacian for the Dirichlet problem. It is still possible to define an extension problem on $\Omega\times (0,\infty)$ which is equivalent to the so-called 
spectral fractional Laplacian \cite{Stinga2010}. Using such an extension, efficient finite element based numerical methods have been
proposed in \cite{MR3348172,DMeidner_JPfefferer_KSchurholz_BVexler_2018a}.
For completeness, we also refer to \cite{HAntil_CNRautenberg_2019a} for an extension problem where the fractional exponent
$s$ is a function of the spatial variable $x \in \Omega$. 

On the other hand, the numerical methods for problems of the type \cref{eq:bvp} present even more challenges, as one needs 
to resolve the singular integrals. The first work that rigorously tackles numerics for \cref{eq:bvp} using finite element method is by Acosta and Borthagaray \cite{Acosta2017}, see also 
\cite{Ainsworth2018}. However, the implementations in these works have been limited to $d = 2$ dimensions.
We also refer to another finite element approach of Bonito, Lei, and Pasciak \cite{ABonito_WLei_JEPasciak_2019a}, which also works in $d=3$ \cite{MR3893441}.

In contrast, we provide a spectral method to approximate the Dirichlet fractional Laplacian \cref{eq:flap_integral} where we attempt to combine fast Fourier transform (FFT) efficiency with the ability to treat exterior value problems as in \cref{eq:bvp}. As we shall illustrate with numerical examples, this approach directly applies to the case when $d = 3$, and extension to even higher dimensions is possible given efficient FFT implementations. Using our method, the application of the fractional operator has the same numerical complexity as a Fourier transform. To solve the Dirichlet problem \cref{eq:bvp} we then use this operator, restricted to $\Omega$, within a conjugate gradient algorithm. 

Related strategies have been suggested recently by other authors. Duo and Zhang introduced a finite difference scheme to efficiently solve equations involving the fractional Laplace operator \cite{Duo2018}. Their method relies on a finite difference approximation of the operator and the fact that this can be expressed as a matrix consisting of blocks of symmetric Toeplitz-Matrices which can be applied efficiently using discrete Fourier transformation based methods. Given this operator, they solve fractional PDEs using iterative methods. Minden and Ying introduced a method to discretize the integral operator \cref{eq:flap_integral} which also leads to a Toeplitz-Matrix. Along with a preconditioner, they also use the conjugate gradient method to solve the arising systems for the fractional Dirichlet problem and fractional diffusion equations \cite{Minden2020}. 
Another example for a spectral method was provided recently by Xu and Darve.
They use eigenfunctions and eigenvalues w.r.t a specific weight of the
Dirichlet fractional Laplacian to solve the fractional Dirichlet problem on the
unit ball \cite{Xu2018}.

Recently,  mesh-free methods based on radial basis functions (RBFs)
to estimate fractional Laplacians \cite{Burkardt2020, Rosenfeld2019} have been proposed. 
Those methods
are related to our method in the sense that they also rely on the element-wise
application of an operator to basis functions. Further, they use the fact that the
Fourier transformation of smooth functions has rapid decay. A novelty that our method shares with \cite{Burkardt2020}
is that we can treat classical and fractional PDE in a single framework, as illustrated in the experiments in the experiments
in \cref{sub:const_rhs}.

A more general, unified framework for finite difference schemes to evaluate Fractional Laplacians for $d=1$ is presented by Huang and Oberman in \cite{Huang2016}. In particular, they use a finite difference scheme based on $\sinc$-interpolation and show an equivalence to a Fourier scheme on an inifinite interval. We extend those results to multiple dimensions and present numerical schemes to calculate the resulting integrals. 

The article is organized as follows. In \cref{sec:sincfrac} we briefly discuss some properties of the $\sinc$-interpolation used in the present work as well as a discrete version of the fractional Laplacian based on this interpolation. The numerical methods to efficiently evaluate fractional Laplacians, as well as some equivalences between scaled periodic and our discretized operators, are presented in \cref{sec:num_methods}.
In a supplementary document, we provide more details on the implementation of our algorithms and on
the computational complexity, see \cref{sub:algorithms}. \Cref{sec:numexp} is devoted to validation of our theoretical results using various numerical examples in both $d=2$ and $d=3$. 
The $L^2$-convergence rates for finite element methods as stated in
\cite{Ainsworth2018} are obtained also using our method.
We conclude the article by successfully applying the proposed scheme to the fractional Allen-Cahn equation and image denoising problems.

\section{The sinc-Fractional Laplacian}
In this section, we define our discrete approximation of the Dirichlet fractional Laplacian. First, in \cref{sec:interpolation}, we introduce an appropriate discretization method for functions with bounded support. Following this, we use a discrete convolution to define our operator in \cref{sec:discrete_operator}.

\label{sec:sincfrac}
\subsection{sinc-Interpolation}\label{sec:interpolation}
It is a well-known fact that, for example, smooth functions with compact support on $\R$ can be well approximated by a weighted sum of scaled and shifted $\sinc$-functions. The $\sinc$-function is defined as
\begin{equation}
	\sinc(x) = \frac{\sin(\pi x)}{\pi x}.
\end{equation}
Two different approaches of approximating functions with $\sinc$-functions are found in the literature. The first approach is to consider the $\sinc$-function as a wavelet scaling function and then to approximate in the sense of wavelets which leads to the so called Shannon-wavelets. A good overview on such techniques can be found in \cite{Cattani2008}. They have also been used to approximate fractional derivatives as shown in \cite{Cattani2016, Cattani2018}.

The second approach, which we will pursue here, is to approximate a function ${u : \R\to\R}$ by
\begin{equation}
	\label{eq:sinc_approx}
	u(x) \approx \sum\limits_{k=-\infty}^{\infty} u(kh) \sinc\left( \frac{x-kh}{h} \right).
\end{equation}
The sum on the right hand side (if it converges) is called \textit{Whittaker Cardinal Function}, see, e.g.,  \cite{McNamee1971} for more details. This approximation is rather precise \cite{Stenger2010} and has found numerous applications, e.g., for solving ordinary differential equations, partial
differential equations and integral equations \cite{Stenger2000, Stenger2010},
but also for approximating integrals arising in fractional calculus
\cite{Baumann2011}. Furthermore, this is the method chosen in \cite{Huang2016}
to derive the weights for a finite difference scheme to compute fractional
Laplacians in one dimension. Another way the $\sinc$-functions are employed
in the area of fractional PDEs is to use $\sinc$-quadrature to compute the
Dunford-Taylor integral arising from the fractional Laplacian, as done, e.g.,
in \cite{ABonito_WLei_JEPasciak_2019a}.

In this work, we will use approximations similar to \cref{eq:sinc_approx},
extended to multiple dimensions. For $x = (x_1, \ldots, x_d)^T\in \R^d$, we set
\begin{equation}
	\varphi(x) = \prod\limits_{j=1}^d \sinc(x_j)
\end{equation}
as the reference basis function and define its scaled and shifted version ${\varphi^N_k \colon \R^d\to\R}$
for $k=(k_1,\dots,k_d)^T\in\Z^d$ by a tensor product
\begin{align}
	\varphi_k^N(x) = \prod\limits_{j=1}^d \varphi(Nx_j - k_j),\quad x = (x_1,\ldots,x_d)^T\in\R^d
\end{align}
where $\Z^d$ is the $d$-dimensional integer lattice.
For $u : \R^d \longrightarrow \R$, we obtain the sinc approximation
\begin{equation}
	u_N(x) = \sum\limits_{k\in\Z^d}u_k\varphi^N_k(x)\quad\text{where }u_k = u(x_k), x_k = k/N.
\end{equation}
Since we are interested in computing the Dirichlet fractional Laplacian, our functions have compact support, which we from now on assume to be contained in the unit cube $[0,1)^d$ (the generalization to other, larger domains is trivial; our assumption is merely for notational convenience). We therefore may truncate the series to 
\begin{equation*}
	u_N(x) =
	\sum\limits_{k_1=0}^{N-1}\cdots\sum\limits_{k_d=0}^{N-1}u_k\varphi^N_k(x) 
		\eqqcolon \sum\limits_{k\in\I_N^d}u_k\varphi^N_k(x),
\end{equation*}
where, if $\I_N \coloneqq \{0,\ldots,N-1\}$, then 
$\I_N^d\subset\Z^d$ indicates its $d$-fold Cartesian product. Later in this work, we will also use the set $\I_N' = \{-N/2,\ldots,N/2-1\}$ and its Cartesian, product respectively.

It is a well-known fact that the basis function $\varphi$ can be obtained as
the inverse Fourier transform of the indicator function 
of a square, i.e.,  
for ${D = [-\pi;\pi]^d}$ we have
\begin{align*}
		(\IFT \chi_D(\omega))(x) 
		 = (2\pi)^d\varphi(x)
\end{align*}
where $ \chi_D(x) = 1 \text{ if } x\in D, 0\text{ otherwise}$ Similarly, $\varphi^N_k(x)$ can be obtained as 
\begin{equation*}
	\varphi^N_k(x) = \varphi(Nx-k)
		= \IFT \underbrace{\left((2\pi N)^{-d} \chi_{D_N}(\omega) e^{-\i \omega k/N}\right)}_{=\FT \varphi^N_k(\omega)}(x)
\end{equation*}
where $D_N = [-N\pi;N\pi]^d\subset \R^d$.
\subsection{The discrete operator}
\label{sec:discrete_operator}
Given a real function $u$ with $\supp u \subset [0;1)^d$, we want
to apply the Dirichlet Fractional Laplace operator $\flap$ of
\cref{eq:flap_ft} to its $\sinc$-approximation $u_N$. We shall write
\begin{equation*}
	\flap_N u \coloneqq \flap u_N
\end{equation*}
and call the operator $\flap_N$ the \emph{sinc-fractional Laplacian}.
We are restricted to grid points, so let $x_\kappa = \kappa/N$, $\kappa \in \I_N^d$. As $\I_N^d$ is
finite and $\flap$ is linear, we obtain
\begin{align}
	\flap u_N(x_\kappa) &= \flap \Big(\sum\limits_{k\in\I_N^d}u_k \varphi^N_k(x_\kappa)\Big) \nonumber\\
			&=  \sum\limits_{k\in\I_N^d}u_k \underbrace{\left(\flap\varphi^N_k\right)(x_\kappa)}
			_{\eqqcolon\Phi^N(\kappa-k)}\nonumber \\
			&=  \sum\limits_{k\in\I_N^d}u_k \Phi^N(\kappa-k).\label{eq:PHI_1}
\end{align}
where we have defined $\Phi^N(\kappa-k) = \flap\varphi^N_k(x_\kappa)$ for $k,\kappa\in\I_N^d$.
In the remainder of this paper, we will occasionally use the notation $\Phi^N_K
= \Phi^N(\kappa-k)$, with $K = \kappa-k$ when it is clear from the context. Notice that \cref{eq:PHI_1} denotes a discrete convolution. The computation of this convolution is the application of the sinc-fractional Laplacian. 

In other words, we obtain the sinc-fractional Laplacian $\flap u_N(x_\kappa)$ for any grid point
$x_\kappa$ as the discrete convolution of $\mathbf u = (u_k)_{k\in\I_N^d}$ and
$\Phi^N$.
Such a convolution can be implemented efficiently using the FFT algorithm once $\Phi^N(\kappa-k)$ is known for all $k,\kappa\in \I_N^d$. More precisely, the \textit{circular} discrete
convolution of two vectors $\x, \y \in \R^{N^d}$ can be calculated as 
\begin{align}
	\big(\x*_d \y\big)(k) &\coloneqq \sum\limits_{\kappa\in\I_N^d} \bar{\mathbf x}(\kappa)\cdot \bar{\mathbf y}(k-\kappa) \label{eq:disc_conv_def}\\
		&= \IDFT\left\{ (\DFT \x) \circ (\DFT \y) \right\}(k)\label{eq:fast_disc_conv}
\end{align}
where $\circ$ denotes the component-wise product of vectors, and $\DFT$ and $\IDFT$
denote the discrete Fourier transformation and the inverse discrete Fourier
transformation, respectively. By circular we mean that negative
components of indices $\kappa-k$ are mapped circularly to their positive counterparts,
in formulas
\[
	\bar{\x} (K) = \begin{cases}
		\x(K)&\text{if }K \geq 0 \\
		\x(K+N)&\text{if }K < 0.
	\end{cases}
\]
While the evaluation of \cref{eq:disc_conv_def} is of complexity 
$\mathcal O((N^d)^2)$, if evaluated for each $k$, the simultaneous evaluation of
\cref{eq:fast_disc_conv} for all $k$ can be  implemented in  $\mathcal O(N^d
\log(N^d))$ time.

For our application, we do not actually want to apply the circular convolution,
but the convolution where we extend by zero instead of periodically. As we still
want to use the FFT-based algorithm to evaluate \cref{eq:PHI_1} because
of its computational efficiency, we set
\[
	\bar u_k = \begin{cases}
		u_k &\text{if } k \geq 0 \\
		0 & \text{otherwise}
	\end{cases}
\]
where the expression $k \geq 0$ is meant component-wise. Then, we have that
\[
	\flap u_N(x_\kappa) = \big(\bar u *_d \Phi^N\big)(\kappa)
\]
which we implement using a FFT of size $(2N)^d$. Further details on the implementation and pseudocode can be found in \cref{sub:algorithms}. 

We have shown that we can obtain the basis function $\varphi(\cdot)$ as the 
inverse Fourier transformation of the indicator function of a square in $\R^d$.
This can in principle be used to obtain the integral fractional Laplacian
$\left(\flap\varphi\right)$ of the basis functions, since for $x\in\R^d$, we have 
\begin{align}
	 \big(\flap\varphi\big)(x) &= \IFT\left( |\omega|^{2s} (\FT \varphi) \right)\nonumber \\
		&= (2\pi)^{-d} \int_D |\omega|^{2s} e^{\i\omega\cdot x}\dom,
\end{align}
and
\begin{align}
	\Phi^N(\kappa-k) &= (2\pi N)^{-d} \int_{D_N} |\omega|^{2s} e^{\i \omega\cdot (x_\kappa - k/N) }\dom \nonumber\\
	&= (2\pi)^{-d} N^{2s} \int_D |\omega|^{2s} e^{\i\omega\cdot(\kappa-k)}\dom\nonumber\\
	&= N^{2s} \big(\flap\varphi\big)(\kappa-k). \label{eq:PHI_integral}
\end{align}

However, using this equality directly is impractical as we would have to evaluate
the oscillating integral for each multi-index $k$. For the one dimensional
case, it is possible to circumvent this issue through the use of the confluent
hypergeometric function \cite{Huang2016}, but for $d>1$ a numerical solution must be found.
Clearly, calculating $\Phi^N$ itself is not necessary in order to implement
\cref{eq:PHI_1} as we only need the discrete Fourier transformation $\hat\Phi^N$ of $\Phi^N$. In \cref{sub:setting_up_the_conv_kernel} we show how  $\hat\Phi^N$ can be obtained efficiently.

\section{Numerical Methods}
\label{sec:num_methods}
The goal of this section is to introduce our numerical methods.
We begin with \cref{sub:calculation_periodic_flap} where we discuss the  computation of fractional Laplacians using simple Fourier methods as mentioned in the introduction. In \cref{sub:equivalence}, we show that the sinc-fractional Laplacian applied to a function with support in $[0,1)^d$, as defined in \cref{eq:PHI_1}, can be seen as a limit $S\to \infty$ of fractional Laplacians obtained by standard Fourier transforms of the function periodically extended outside $[0,S)^d$.

We then describe, in \cref{sub:setting_up_the_conv_kernel}, our numerical quadrature method used to compute $\hat\Phi$. The solution strategy to the Dirichlet exterior value problem \cref{eq:bvp} is discussed in \cref{sub:solving_the_system}.

\subsection{Computation of the periodic fractional Laplacian}
\label{sub:calculation_periodic_flap}

To experimentally test that our discrete approximation \cref{eq:PHI_1} indeed approaches the Dirichlet fractional Laplacian, we can compare it to the periodic fractional Laplacian which is calculated
by extending $u$ periodically outside of a truncation domain, instead of extending $u$ by
$0$. 

For periodic functions, the periodic fractional Laplacian and 
the integral fractional Laplacian are equal \cite{NAbatangelo_EValdinoci_2019a}. If the Dirichlet fractional Laplacian
is approximated with the periodic fractional Laplacian on a finite domain, an error 
is introduced due to the implicit periodization of the function. This effect, however,
is reduced if the function is scaled with a factor $S > 1$ before the
application of the periodic fractional Laplacian and rescaled to the original
domain afterwards. Heuristically, this occurs because the additional support introduced by the periodic continuation becomes shifted further away from the original support. The error is then on the order of $S^{-(d+2s)}$. This estimate is summarized in the following Lemma. The main motivation for this Lemma is the fact that in the next section we shall establish an equivalence between the scaled periodic fractional Laplacian and the sinc-fractional Laplacian, see \cref{thm:equiv_scaledperi_shannon,thm:equiv_scaledperi_shannon_2}.

\begin{lemma}\label{thm:diff_flappS_flap}
Let $u\in L^1(\R^d)$ with $\supp u$ \sout{$= \Omega$} $\subset [0, 1)^d$. Let $\flap$ be the Dirichlet fractional Laplacian (see
	equation \cref{eq:flap_integral}) and $\flapp$ be the periodic fractional Laplacian (applied on the function restricted to $[0, 1)^d$). Assume furthermore that $\flap u \in L^1([0,1)^d)$.
     Then, for a.e. $x\in(0,1)^d$, $S$ sufficiently large, we have
	\begin{equation}
	\label{eq:flap_spectral_approaches_int}
		 S^{-2s} \left(\flapp u(S\,\cdot)\right)(x/S) = \flap u(x) + \mathcal O\left(S^{-(d+2s)}\right).
	\end{equation}
\end{lemma}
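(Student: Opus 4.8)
The plan is to unravel the left-hand side by a change of variables, reducing the scaled periodic operator to an ordinary integral fractional Laplacian of the period-$S$ extension of $u$, and then to estimate the periodization error directly. Writing $g = u(S\,\cdot)$, the support of $g$ lies in $[0,1/S)^d \subset [0,1)^d$ for $S\ge 1$, so restricting $g$ to $[0,1)^d$ and extending with period $1$ gives $g_\#(y) = \sum_{m\in\Z^d} u(Sy - Sm)$. Since the periodic fractional Laplacian coincides with the integral one on periodic functions (as noted above), I would apply $\flap$ to $g_\#$, evaluate at $x/S$, and substitute $\eta = y/S$. Using $|x/S - y/S|^{d+2s} = S^{-(d+2s)}|x-y|^{d+2s}$ together with the Jacobian $S^{-d}$, the scaling factors collapse to
\[
	S^{-2s}\big(\flapp u(S\,\cdot)\big)(x/S) = \flap w(x), \qquad w(x) \coloneqq \sum_{m\in\Z^d} u(x - Sm),
\]
so that $w$ is precisely the period-$S$ periodic extension of $u$.

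Next I would observe that for $x\in(0,1)^d$ and $S$ large (say $S\ge 2$) the requirement $x - Sm\in[0,1)^d = \supp u$ forces $m=0$, hence $w(x)=u(x)$ on $(0,1)^d$. This is the crucial structural point: in the difference $\flap w(x) - \flap u(x)$ the term $w(x)-u(x)$ vanishes, and the integrand reduces to $\tfrac{u(y)-w(y)}{|x-y|^{d+2s}} = -\tfrac{\sum_{m\neq 0} u(y-Sm)}{|x-y|^{d+2s}}$, whose numerator is identically zero in a neighborhood of the singularity $y=x$. Therefore the principal value degenerates into a genuinely absolutely convergent integral, and no delicate cancellation across the singularity remains.

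Then I would estimate this difference. Substituting $y' = y - Sm$ in each summand and using $\supp u\subset[0,1)^d$, the elementary bound $|x - y' - Sm| \ge S|m| - \sqrt d$ for $x,y'\in[0,1)^d$ yields
\[
	\big|\flap w(x) - \flap u(x)\big| \le C(d,s)\,\|u\|_{L^1}\sum_{m\neq 0}\frac{1}{\big(S|m|-\sqrt d\big)^{d+2s}}.
\]
For $S$ sufficiently large each denominator is comparable to $(S|m|)^{d+2s}$, so the right-hand side is bounded by a constant times $S^{-(d+2s)}\sum_{m\neq 0}|m|^{-(d+2s)}$, and this lattice sum converges exactly because $d+2s > d$. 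Combining this with the identity from the first step gives the asserted $\mathcal O(S^{-(d+2s)})$ error, with the leading contribution coming from the nearest copies $|m|=1$.

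The main obstacle I anticipate is bookkeeping at the interface of the first two steps: one must check that the change of variables is compatible with the principal value, which is licensed precisely by the observation in step two that the singular part cancels once $w(x)=u(x)$, and one must track carefully how the composition ``restrict to $[0,1)^d$, periodize with period $1$, evaluate at $x/S$'' converts the period $1$ into the effective period $S$. The analytic core — the decay of the periodization tail — is then routine given $u\in L^1(\R^d)$ and the convergence of $\sum_{m\neq 0}|m|^{-(d+2s)}$, with the hypothesis $\flap u\in L^1([0,1)^d)$ ensuring the right-hand side is well defined for a.e.\ $x$.
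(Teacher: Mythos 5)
Your proof is correct and follows essentially the same route as the paper's: both express the scaled periodic operator as the integral fractional Laplacian of a periodization of $u$, observe that for $x\in(0,1)^d$ only the $m=0$ translate contributes a singular term (and it yields exactly $\flap u(x)$ after scaling), and bound each nonzero translate by $\|u\|_{L^1}\,(S|m|-\sqrt{d})^{-(d+2s)}$, summing the convergent lattice series $\sum_{m\neq 0}|m|^{-(d+2s)}$. Your only departure is organizational: you perform the change of variables globally first, reducing to the period-$S$ periodization $w$ and then estimating $\flap w - \flap u$, whereas the paper keeps the period-$1$ periodization of $u(S\,\cdot)$ and rescales each term individually — the resulting estimates are identical.
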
 
\begin{proof}
Assume first that $u\in C_c^\infty([0,1)^d)$, fix $x\in(0,1)^d$ and set $\tilde x = x/S$, $u_S=u(S\cdot)$. We calculate  
\begin{align}
		\flapp u_S(\tilde x) &= \flap\left(\sum\limits_{k\in\Z^d} u_S(\cdot-k)\right)(\tilde x) \nonumber\\
			&= C(d,s) \int\limits_{\R^d} \frac{ u(S\tilde x) -  u(Sy)}{|\tilde x-y|^{d+2s}}\mathop{dy} \label{eq:flap_spectral_approaches_int_a}\\
				&\qquad+ C(d,s) \sum_{k\in\Z^d\setminus\{0\}}\;\int\limits_{\R^d} \frac{ u(S\tilde x-Sk) -  u(Sy-Sk)}{|\tilde x-y|^{d+2s}}\mathop{dy},\label{eq:flap_spectral_approaches_int_b}
		\end{align}

		where we note that the sum in equation \eqref{eq:flap_spectral_approaches_int_b} converges absolutely, owing to  \eqref{eq:estimate_int_decay1} and the estimate \eqref{eq:estimate_int_decay2} below.

	A linear transformation in the term in \eqref{eq:flap_spectral_approaches_int_a} yields
\[
C(d,s) \int\limits_{\R^d} \frac{ u(S\tilde x) -  u(Sy)}{|\tilde x-y|^{d+2s}}\mathop{dy}
 = C(d,s) \int\limits_{\R^d} \frac{ u(S\cdot x/S) -  u(Sy)}{|x/S-y|^{d+2s}} \frac{S^d}{S^d}  \mathop{dy}
 = S^{2s} \flap u(x).
\]
	For the integral in the summand in \eqref{eq:flap_spectral_approaches_int_b}, we have for $k\neq 0$
	\begin{equation} \label{eq:estimate_int_decay1}
		\int\limits_{\R^d} \frac{\left[ u(S\tilde x-Sk) -  u(Sy-Sk)\right]}
			{|\tilde x-y|^{d+2s}}\mathop{dy} = 
-\int\limits_{\R^d} \frac{u(Sy-Sk)}
			{|\tilde x-y|^{d+2s}}\mathop{dy}
	\end{equation}
	as $S\tilde x - Sk \not\in (0,1)^d \supset \supp u$.
	Furthermore,
	\begin{equation} \label{eq:estimate_int_decay2}
		\left|\,\int\limits_{\R^d} \frac{   u(Sy-Sk)} {|\tilde x-y|^{d+2s}}\mathop{dy}\right|
		=
		S^{2s}\int\limits_{(0,1)^d} \frac{u(y)}{|x - y - Sk|^{d+2s}}dy \leq
		\|u\|_{L^1((0,1)^d)}\cdot C' \frac{1}{S^d} \frac{1}{|k|^{d+2s}},	
	\end{equation}
	 where the constant $C'$ remains bounded for large $S$. 
	Summing over $k\neq 0$ and dividing by $S^{2s}$ yields  the  result.
	
For $\flap u \in L^1$ only, note that we still have $\flapp u_S(S\cdot) =S^{2s} \flap u(\cdot)  -  \sum_{k\in\Z^d\setminus\{0\}}\,\int\limits_{\R^d} \frac{u(Sy-Sk)}
			{|S\cdot -y|^{d+2s}}\mathop{dy}$ and that the second term is a non-singular integral that can be estimated as above.
\end{proof}

The periodic fractional Laplacian can be discretized using the DFT. 
A comprehensive overview is provided in \cite{AntilBartels2017}. Briefly repeated,
the $N^d$-point discrete Fourier transformation of a vector $\x\in\R^{N^d}$ is defined as
\begin{equation}
\label{eq:def_DFT}
	\left(\DFT_N \x\right)_k = \hat x_k = \sum\limits_{j_1=0}^{N-1}\cdots\sum\limits_{j_d=0}^{N-1}
		x_j \e^{-\i\frac{ 2\pi j\cdot k}{N}}
\end{equation}
and the inverse discrete Fourier transform is
\begin{equation}
\label{eq:def_IDFT}
	\left(\IDFT_N \hat \x\right)_k = x_k = \frac{1}{N^d} \sum\limits_{j_1=0}^{N-1}\cdots\sum\limits_{j_d=0}^{N-1}
		\hat x_j \e^{\i\frac{ 2\pi j\cdot k}{N}}.
\end{equation}
If the size $N$ of the DFT is obvious, we will omit the subscript index $N$.
The $N^d$-point discrete periodic fractional Laplacian of $u : [0;1)^d
\longrightarrow \R$ is calculated via
\[
	\left(\flapp_{N}u\right) (x_\kappa) = \big(\IDFT_N( \zeta \circ \DFT_N u)\big) (\kappa)
\]
where $\circ$ denotes the Hadamard (entrywise) product, $\DFT$ and $\IDFT$ the $N^d$-point discrete (inverse) Fourier transformation and 
\[
	\zeta_k = |2\pi k|^{2s}.
\]
Note that usual FFT implementations of the DFT calculate the discrete spectrum
of $f$ in the range $\{0,\ldots,N-1\}^d$. To have the factors $\zeta$ at the
correct scale, one has to shift the Fourier coefficients periodically to the
interval $\{-N/2, \ldots, N/2-1\}$ to obtain
\begin{align*}
	\left(\flapp_{N}u\right) (x_\kappa) &= \big(\IDFT_N( \zeta \circ \DFT_N u)\big) (\kappa) \\
		&= \frac{1}{N^d} \sum\limits_{k_1=-N/2}^{N/2-1}\cdots\sum\limits_{k_d=-N/2}^{N/2-1}
			 |2\pi k|^{2s} \hat u_k \e^{i \frac{2\pi k\cdot \kappa}{N}} .
\end{align*}

If $S\in\N$, we calculate the discretized scaled periodic fractional Laplacian in
\cref{eq:flap_spectral_approaches_int} similarly using the $SN$-point DFT and
inverse DFT as follows: we extend the vector
\[
	\textbf u = (u_k)_{k\in\I_N^d} \in \R^{N^d}, u_k = u(k/N)
\]
to a vector
\[
	\bar {\textbf u} = (\bar u_k)_{k\in\I_{SN}^d}\in \R^{(SN)^d}, \bar u_k = \begin{cases}
		u_k &\text{if } k < N \\
		0&\text{otherwise}
	\end{cases}
\]
where the $<$-sign is meant component-wise and obtain
\begin{align}
	\left(\flapp_{S,N}u\right) (x_\kappa) &=  \big(\IDFT_{SN}( \zeta \circ \DFT_{SN} u)\big) (\kappa) \\
		&= \frac{1}{(SN)^d} \sum\limits_{j_1=-SN/2}^{SN/2-1}\cdots\sum\limits_{j_d=-SN/2}^{SN/2-1}
			 |2\pi j|^{2s} \hat u_j\e^{\i \frac{2\pi j\cdot \kappa}{SN}} \\
		&=  \frac{(2\pi)^{2s}}{(SN)^d}\sum\limits_{j\in{\I'}_{SN}^d} |j|^{2s}
			\left(\sum\limits_{k\in\I_{SN}^d} u_k \e^{-\i \frac{2\pi j\cdot k}{SN}  } \right)
				\e^{\i \frac{2\pi j\cdot \kappa}{SN}} \\
		&=  \frac{(2\pi)^{2s}}{(SN)^d} 
		\sum\limits_{k\in\I_N^d} u_k\sum\limits_{j\in{\I'}_{SN}^d}
		|j|^{2s} \e^{\i \frac{2\pi}{SN} j\cdot (\kappa -k)}. \label{eq:expl_form_scaled_periodic}
\end{align}
The last line is certainly not the most efficient way to evaluate $\left(\flapp_{S,N}u\right) (x_\kappa)$ -- this
 should instead be done via the FFT algorithm as stated in the first line.
However, the expression will be needed in the following to show an equivalence of the sinc-fractional Laplacian and the scaled periodic fractional Laplacian.
\subsection{Equivalence of the scaled periodic fractional Laplacian and the sinc-fractional Laplacian}
\label{sub:equivalence}
There is a certain equivalence of the discrete scaled periodic fractional
Laplacian and the sinc-fractional Laplacian. Precisely, if the integration in 
the calculation of $\Phi$, see \cref{eq:PHI_integral} is done exactly,
the $N^d$-point sinc-fractional Laplacian is the same as the $N^d$-point
discrete scaled periodic fractional Laplacian with infinite scale factor. To be precise, we have the following theorem.
\begin{theorem}\label{thm:equiv_scaledperi_shannon}
	Let $u\in C_c([0;1]^d)$.
	Then, for $S\longrightarrow\infty$ 
	\[
		S^{-2s}\big(\flapp_{S,N}u\big) (x_\kappa) \longrightarrow\big(\flap_{N}u\big) (x_\kappa)
		\quad\forall x_\kappa = \kappa/N, \kappa \in \mathcal{I}_N^d.
	\]
\end{theorem}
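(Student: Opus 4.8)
The plan is to start from the explicit (FFT-inefficient) representation \eqref{eq:expl_form_scaled_periodic} of the scaled periodic fractional Laplacian, multiply through by $S^{-2s}$, and recognize the resulting object as a Riemann sum that converges to the integral defining $\Phi^N$ in \eqref{eq:PHI_integral}. Concretely, I would first note that the outer sum over $k\in\I_N^d$ is finite and independent of $S$ (since $N$ is fixed), so it suffices to establish, for every fixed $k$, the convergence
\[
	\frac{(2\pi)^{2s}}{S^{2s}(SN)^d}\sum_{j\in{\I'}_{SN}^d}|j|^{2s}\e^{\i\frac{2\pi}{SN}j\cdot(\kappa-k)}
	\longrightarrow \Phi^N(\kappa-k),
\]
and then to combine the finitely many limits, weighted by the $u_k$, at the very end.

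For the inner sum I would introduce the grid frequencies $\omega_j = \tfrac{2\pi}{SN}j$, which, as $j$ ranges over ${\I'}_{SN}^d$, tile the box $D=[-\pi,\pi)^d$ by cells of side length $\Delta\omega = \tfrac{2\pi}{SN}$ and volume $(\Delta\omega)^d = (2\pi)^d/(SN)^d$. Writing $|j|^{2s} = (SN/2\pi)^{2s}|\omega_j|^{2s}$ and tracking the prefactors, the $S^{2s}$ in the denominator cancels against the $(SN)^{2s}$ produced by this substitution, and $(SN)^{-d}$ supplies precisely the volume element, leaving
\[
	\frac{N^{2s}}{(2\pi)^d}\sum_{j\in{\I'}_{SN}^d}|\omega_j|^{2s}\e^{\i\omega_j\cdot(\kappa-k)}\,(\Delta\omega)^d,
\]
which is a left-endpoint Riemann sum for $\tfrac{N^{2s}}{(2\pi)^d}\int_D|\omega|^{2s}\e^{\i\omega\cdot(\kappa-k)}\dom = N^{2s}\big(\flap\varphi\big)(\kappa-k) = \Phi^N(\kappa-k)$.

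The convergence of this Riemann sum is where one must be slightly careful, but it is in fact routine: the integrand $\omega\mapsto|\omega|^{2s}\e^{\i\omega\cdot(\kappa-k)}$ is continuous on the compact box $D$ (for $s\in(0,1)$ the factor $|\omega|^{2s}$ is continuous even at the origin), hence uniformly continuous, so as $\Delta\omega\to0$ the sums converge to the integral regardless of the left-endpoint sampling. I expect the only genuine obstacle to be the careful bookkeeping of the three competing scale factors $S^{-2s}$, $|j|^{2s}\sim(SN)^{2s}$, and $(SN)^{-d}$; once these are tracked so that the $S$- and $N$-powers collapse into the single prefactor $N^{2s}(2\pi)^{-d}$ together with the volume element $(\Delta\omega)^d$, the theorem follows by summing the finitely many limits to obtain $\sum_{k\in\I_N^d}u_k\Phi^N(\kappa-k) = \flap_{N}u(x_\kappa)$.
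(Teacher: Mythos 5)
Your proposal is correct and takes essentially the same approach as the paper: both reduce, via linearity and the finiteness of the sum over $k$, to the convergence of the kernel, and both recognize the rescaled sum as a Riemann sum for the integral $(2\pi)^{-d}N^{2s}\int_{[-\pi,\pi]^d}|\omega|^{2s}\e^{\i\omega\cdot(\kappa-k)}\dom$ defining $\Phi^N(\kappa-k)$. The only difference is bookkeeping --- you rescale globally to frequencies $\omega_j = \tfrac{2\pi}{SN}j$ with mesh $2\pi/(SN)$ on the fixed box $[-\pi,\pi]^d$, whereas the paper writes $j = Sj'+i$ and treats the sum as Riemann sums over unit cubes tiling $[-N/2,N/2]^d$ before a final change of variables; these are the same Riemann sum in different coordinates, and your explicit appeal to (uniform) continuity of the integrand supplies the justification the paper leaves implicit.
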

\begin{proof}
	Refer to  equations \eqref{eq:expl_form_scaled_periodic}, \eqref{eq:PHI_1} to see
	that it is enough to show that
	\begin{equation}
		\label{eq:PHI_scaled_int_expr}
		S^{-2s}\frac{(2\pi)^{2s}}{(SN)^d}\sum\limits_{j\in{\I'^d_{SN}}} |j|^{2s} \e^{\i \frac{2\pi}{SN} j\cdot (\kappa -k)}
			\longrightarrow \Phi^N(\kappa-k)
	\end{equation}
	$\forall K\coloneqq \kappa - k$ as $S\longrightarrow \infty$. Indeed, we have
	\[
		\Phi^N(\kappa-k) = (2\pi N)^{-d}\int_{D_N} |\omega|^{2s} \e^{\i \frac{\omega \cdot (\kappa-k)}{N} }\dom
	\]
	(see equation \eqref{eq:PHI_integral}) and for the left-hand side of
	\eqref{eq:PHI_scaled_int_expr}, we have
	\begin{align*}
		&\frac{(2\pi)^{2s}}{N^d}S^{-(d+2s)}   \sum\limits_{j\in{\I'^d_{SN}}} |j|^{2s} \e^{\i \frac{2\pi}{SN} j\cdot K} \\
		=&\frac{(2\pi)^{2s}}{N^d}S^{-(d+2s)}\sum\limits_{j\in{\I'^d_N}}\sum\limits_{i\in{\I^d_S}}
			|Sj + i|^{2s}\e^{\i \frac{2\pi}{SN}(Sj + i)\cdot K} \\
		=&\frac{(2\pi)^{2s}}{N^d}\sum\limits_{j\in{\I'^d_N}}\sum\limits_{i\in{\I^d_S}}
			S^{-d}|j + i/S|^{2s}\e^{\i \frac{2\pi}{N}(j + i/S)\cdot K} \\
		\overset{S\rightarrow\infty}{\longrightarrow}&\frac{(2\pi)^{2s}}{N^d}\sum\limits_{j\in{\I'^d_N}}\int\limits_{j_1}^{j_1+1}\cdots \int\limits_{j_d}^{j_d+1}
		 |\omega|^{2s}\e^{\i \frac{2\pi}{N}\omega\cdot K} \label{eq:PHI_scaled_int_expr:S_limit}\\
		=&\frac{(2\pi)^{2s}}{N^d}\int\limits_{\left[-\frac{N}{2}  ; \frac{N}{2}\right]^d}
			|\omega|^{2s}\e^{\i \frac{2\pi}{N}\omega\cdot K}\dom\\
		=&  (2\pi N)^{-d}\int\limits_{D_N}
			|\omega|^{2s}\e^{\i\frac{\omega\cdot K}{N}}\dom
	\end{align*}
	which completes the proof.
\end{proof}

A similar result is presented in \cite{Huang2016} for functions
with non-compact support using the semi-discrete Fourier transformation. In
\cref{sub:setting_up_the_conv_kernel}, see
\cref{thm:equiv_scaledperi_shannon_2}, we show a direct relation between simple quadrature rules to evaluate the discrete convolution kernel $\hat{\Phi}$ and scaled Fourier fractional Laplacians.
\subsection{Setting up the convolution kernel}
\label{sub:setting_up_the_conv_kernel}
As stated before, we aim to calculate the discrete Fourier transformation
$\hat\Phi$ of $\Phi$ directly, instead of having to calculate it as the DFT of
$\Phi$ as the latter is hard to obtain. While the fast implementation
of the convolution is standard and can be found in many textbooks, our contribution is the formulation
that makes the use of fast convolution algorithms applicable.
Therefore, let ${{\I'}_{2N}^d = \{-N,\ldots,(N-1)\}^d}$
and ${k\in {\I'}_{2N}^d}$ a multiindex. Let $\Phi_k = \flap
\varphi_N(x_k)$, $x_k = k/N\in\R^{(2N)^d}$. Let $\hat\Phi = \DFT_{2N}(\Phi)$ the discrete
Fourier transform of $\Phi \in \R^{(2N)^d}$. We start the computation with the fact that
\begin{align}
	\hat\Phi_k &= \sum\limits_{j\in\I'^d_{2N}}\Phi_j \e^{-\i 2\pi k\cdot j/(2N) } \\
	&= N^{2s}(2\pi)^{-d} \sum\limits_{j\in{\I'}_{2N}^d}\left( \int_{D}|\omega|^{2s} \e^{ \i\omega\cdot j }\dom \right) 
		\e^{-\i \pi k\cdot j/N} \\
	&= N^{2s}
		(2\pi)^{-d} \int_{D}|\omega|^{2s}\sum\limits_{j\in\I'^d_{2N}}  
		\e^{\i j\cdot \left(\omega - \frac{\pi}{N} k\right)}\label{eq:deriv_PHI_hat_1} \dom,
\end{align}
where we use the definition of the discrete Fourier transformation in the first
equation, see \cref{eq:def_DFT} and the formula for $\Phi_j$, see
\cref{eq:PHI_integral}, in the second equation.  For $x\in\R$, define
\begin{equation}
\label{eq:Y_ident}
	Y(x)\coloneqq\sum\limits_{j=-N}^{N-1} \e^{\i j x} = \begin{cases}
		\frac{\e^{-\i Nx}(\e^{2\i Nx} - 1)}{\e^{\i x}-1}&\text{if } \e^{\i x} -1 \neq 0 \\
		2N&\text{otherwise} .
	\end{cases}
\end{equation}
To simplify the sum in \cref{eq:deriv_PHI_hat_1}, we observe that for
${x\in\R^d}$, we have
\begin{align*}
	\sum\limits_{j\in\I'^d_{2N}} \e^{\i j\cdot x} &=
			\sum\limits_{j_1=-N}^{N-1}\cdots\sum\limits_{j_d=-N}^{N-1} \e^{\i(j_1 x_1+\cdots +j_d x_d)} \\
			&=\prod\limits_{i=1}^d Y(x_i)\\
			&\eqqcolon Y_d(x),
\end{align*}
plug this into \cref{eq:deriv_PHI_hat_1} and obtain
\begin{align}
	\hat\Phi_{k} &=N^{2s}
		(2\pi)^{-d} \int_{D}|\omega|^{2s}\sum\limits_{j\in\I'^d_{2N}}  
		\e^{\i j\cdot (\omega -\pi/N k)} \dom \\
	&=N^{2s}(2\pi)^{-d}\int_{D}|\omega|^{2s}Y_d\left( \omega -\pi/N k \right)\dom \\
	&=\underbrace{N^{2s}(2\pi)^{-d}\left(\frac{\pi}{N}\right)^{d+2s}}_{=\pi^{2s}\cdot (2N)^{-d}}  
		\int_{[-N;N]^d} |\omega|^{2s}Y_d\left( \frac{\pi}{N} (\omega - k) \right)\dom .
\end{align}
Now, we have the same domains for $k$ and $\omega$. Finally, we note that
the second factor $Y_d(\cdots)$ in the integrand is periodic with the length $2N$ of
the integrals and, thus, \cref{eq:deriv_PHI_hat_1} can be implemented as
a convolution using the FFT algorithm and using quadrature rules as follows:
\begin{align}
	\hat\Phi_{k} &=\underbrace{ \pi^{2s}(2N)^{-d}}_{C(N,d,s)}
		\int_{[-N;N]^d}\left| \omega\right|^{2s}
		Y_d\left(  \frac{\pi}{N} \left(\omega -k\right)\right)\dom\\
		&\approx C(N,d,s)\sum\limits_{j\in\I'^d_{2N}}\sum\limits_{i=1}^{N_Q}\alpha_i\left| j+x_i\right|^{2s}
		Y_d\left(  \frac{\pi}{N} \left(j+x_i -k\right)\right)\\
		&= C(N,d,s)\sum\limits_{i=1}^{N_Q}\alpha_i\sum\limits_{j\in\I'^d_{2N}}\left| j+x_i\right|^{2s}
		Y_d\left( - \frac{\pi}{N} \left(k-j\right) + \frac{\pi}{N}x_i\right) .\label{eq:quadrature_rules_phi_hat}
\end{align}
Where $(x_i, \alpha_i)_{i=1,\ldots,N_Q}$ is a quadrature rule on $[0;1]^d$.
The inner sum can be obtained as a discrete convolution for each $i$ using two
forward and one backward DFTs. 
In summary, we have to execute $3N_Q$ DFTs of size $(2N)^d$ to obtain $\hat\Phi$.
This is the computationally most demanding step in our algorithm, but it has to be
performed only once when $\hat\Phi$ is applied multiple times. The values of $\hat\Phi$ could even be
stored for given of $N$ and $s$. It is also possible to reduce the sizes
of the DFTs to $(2N-1)^d$. This, however, complicates the preceding computations. 

A consequence of \cref{thm:equiv_scaledperi_shannon} is that the
exactness of the integration in \cref{eq:quadrature_rules_phi_hat} is
decisive for the accuray of our method. A trivial choice for the integration
points $x_i$ and weights $\alpha_i$ is 
\begin{equation}
\label{eq:naive_quadrature}
	x_i = \frac{i}{N_Q}, i \in \{0,\ldots,N_Q-1\}^d\text{ and }
		\alpha_i = \alpha = \frac{1}{N_Q^d}.
\end{equation}
For this quadrature rule, the sinc-fractional Laplacian is exactly the same as
the discrete scaled periodic fractional Laplacian with $S = 2N_Q$. We will
discuss more possibilities along with numerical experiments in
\cref{sub:experiments_integration_phi_hat}.
\begin{theorem}
	\label{thm:equiv_scaledperi_shannon_2}
	If $\hat\Phi$ in \cref{eq:quadrature_rules_phi_hat} is calculated using
	the quadrature rule from \cref{eq:naive_quadrature}, then with $S = 2N_Q$
	we have for all $\kappa$ that
	\[
		(\flap_{N}u)(x_\kappa) = (\flapp_{S,N}u)(x_\kappa)
	\]
\end{theorem}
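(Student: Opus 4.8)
The plan is to exploit that both operators in the claimed identity act on the grid values $(u_k)_{k\in\I_N^d}$ as a single discrete convolution evaluated at the points $x_\kappa$, $\kappa\in\I_N^d$. Reading off \cref{eq:expl_form_scaled_periodic}, the scaled periodic operator is the convolution of $\u$ against the kernel $\Psi_S(K) = \frac{(2\pi)^{2s}}{(SN)^d}\sum_{j\in{\I'}_{SN}^d}|j|^{2s}\e^{\i\frac{2\pi}{SN}j\cdot K}$, while by \cref{eq:PHI_1} the sinc-fractional Laplacian is the convolution against $\Phi^N_Q = \IDFT_{2N}(\hat\Phi_Q)$, where $\hat\Phi_Q$ is the right-hand side of \cref{eq:quadrature_rules_phi_hat} with the quadrature \cref{eq:naive_quadrature} inserted. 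Since $\kappa,k\in\{0,\dots,N-1\}^d$, the only relevant arguments are $K=\kappa-k\in\{-(N-1),\dots,N-1\}^d$, so it suffices to prove $\Phi^N_Q(K)=\Psi_S(K)$ on this index set, up to the $S^{-2s}$ scaling that is built into the scaled periodic operator (cf.\ \cref{thm:equiv_scaledperi_shannon}).

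First I would collapse the double sum in the quadrature formula. Substituting $x_i=i/N_Q$, $i\in\{0,\dots,N_Q-1\}^d$, and $\alpha_i=N_Q^{-d}$ into \cref{eq:quadrature_rules_phi_hat}, the points $j+i/N_Q$ with $j\in{\I'}_{2N}^d$ run, after the change of index $\ell=N_Q j+i$, exactly over $\ell/N_Q$ with $\ell\in{\I'}_{SN}^d$, because $S=2N_Q$. Pulling the factor $N_Q^{-2s}$ out of $|\ell/N_Q|^{2s}$ and combining $(2N)^{-d}N_Q^{-d}=(SN)^{-d}$ together with $\pi^{2s}N_Q^{-2s}=(2\pi)^{2s}S^{-2s}$ turns $\hat\Phi_Q$ into a single sum over ${\I'}_{SN}^d$ with prefactor $(2\pi)^{2s}S^{-2s}(SN)^{-d}$, the $\omega$-argument of $Y_d$ becoming $\frac{2\pi}{SN}\ell-\frac{\pi}{N}k$.

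Next I would apply the inverse DFT and use orthogonality. Writing $Y_d$ as $\sum_{m\in{\I'}_{2N}^d}\e^{\i m\cdot(\cdots)}$ via \cref{eq:Y_ident} and interchanging the order of summation, the sum over the DFT frequency $k\in{\I'}_{2N}^d$ is carried out first; it equals $\sum_{k}\e^{\i\frac{2\pi}{2N}(K-m)\cdot k}=(2N)^d$ exactly when $K\equiv m\pmod{2N}$ componentwise and vanishes otherwise. Because both $m$ and the relevant $K$ lie in the fundamental domain $\{-N,\dots,N-1\}$ in each component, and $K$ never attains $-N$, only $m=K$ survives. Cancelling the $(2N)^d$ then leaves precisely $\Phi^N_Q(K)=S^{-2s}\Psi_S(K)$, which is the asserted identity once the intrinsic $S^{-2s}$ scaling is accounted for.

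The main obstacle is the bookkeeping of prefactors and index ranges through the reindexing $\ell=N_Q j+i$ and the orthogonality collapse: one must check that the combined lattice- and quadrature-indices tile ${\I'}_{SN}^d$ without overlap or gap, and that the fundamental-domain argument is airtight, i.e.\ that $K=\kappa-k$ with $\kappa,k\in\{0,\dots,N-1\}$ can never be congruent modulo $2N$ to a second $m\in{\I'}_{2N}^d$. Everything else is routine manipulation of geometric sums.
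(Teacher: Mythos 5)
Your proposal is correct and takes essentially the same route as the paper's proof: both reduce the claim to an identity between the two convolution kernels, collapse the quadrature double sum via the reindexing $\ell = N_Q j + i$ into a single sum over ${\I'}_{SN}^d$ (using $S=2N_Q$), and then establish the kernel identity by an exponential-sum manipulation with $Y_d$, with the same bookkeeping of the prefactor $(2\pi)^{2s}S^{-2s}(SN)^{-d}$ and the same treatment of the $S^{-2s}$ normalization that the paper also builds silently into $\flapp_{S,N}$ in its proof (cf.\ \cref{eq:expl_form_scaled_periodic}). The only deviation is directional and cosmetic: the paper computes the forward DFT of the scaled-periodic kernel $\tilde\Phi$ and matches it against the collapsed quadrature formula for $\hat\Phi$, whereas you invert $\hat\Phi$ via the IDFT and use orthogonality of the characters together with the fundamental-domain check on $K=\kappa-k$ to recover the periodic kernel in physical space --- the same computation read backwards.
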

\begin{proof}
	
	We already derived that
	\[
		(\flapp_{S,N} u)(x_\kappa) = \frac{(2\pi)^{2s}}{S^{2s}{NS}^d} \sum\limits_{k\in\I_N^d}u_k \underbrace{\sum\limits_{j\in{\I'}_{NS}^d} |j|^{2s} \e^{\i \frac{2\pi j (\kappa-k)}{SN}}}_{=\tilde\Phi(\kappa-k)},
	\]
	see \ref{eq:expl_form_scaled_periodic}.
Using \ref{eq:PHI_1} one can verify that it is enough to show that
\[
	\Phi^{N}(\kappa - k) = \frac{(2\pi)^{2s}}{S^{2s}{NS}^d}\tilde\Phi(\kappa-k)
\]
for an appropriate choice of $(\alpha_i, x_i)_{i=1,\ldots,N_Q}$. From
	\ref{eq:quadrature_rules_phi_hat}, we derive using \ref{eq:naive_quadrature}
	that
	\begin{equation*}
		\hat\Phi_{k} = C(N,d,s) \frac{1}{N_Q^d}
		\sum\limits_{j\in{\I'_{2N N_Q}}}\left| \frac{j}{N_Q} \right|^{2s}	
			Y_d\left(\frac{\pi}{N} \left( \frac{j}{N_Q} -k\right)\right).
	\end{equation*}
We then calculate the DFT of $\tilde\Phi$ and obtain

\begin{align*}
	\hat{\tilde\Phi}_k &=\frac{(2\pi)^{2s}}{S^{2s}NS} \sum\limits_{\kappa\in{\I'}_N^d} \tilde\Phi_\kappa \e^{-\i \frac{2\pi\kappa k}{2N}}\\
		&=\frac{(2\pi)^{2s}}{S^{2s}{NS}^d}\sum\limits_{\kappa\in{\I'}_N^d} \left(\sum\limits_{j\in{\I'}_{NS}^d} |j|^{2s} \e^{\i \frac{2\pi j \kappa}{SN}} \right) \e^{-\i \frac{2\pi\kappa k}{2N}} \\
		&=\frac{(2\pi)^{2s}}{S^{2s}{NS}^d} \sum\limits_{j\in{\I'}_{NS}^d} |j|^{2s}  
			Y\left(2\pi \left( \frac{j}{SN} - \frac{k}{2N}   \right)\right) \\
		&=\frac{\pi^{2s}}{(2N)^d} \frac{1}{{N_Q}^d} \sum\limits_{j\in{\I'}_{2N N_Q}^d} \left| \frac{j}{N_Q}  \right|^{2s}  
			Y\left( \frac{\pi}{N} \left( \frac{j}{N_Q} - k \right)\right),
\end{align*}

which completes the proof.	
\end{proof}

\subsection{Solving the Dirichlet problem}
\label{sub:solving_the_system}
In \cref{sec:interpolation}, we have seen how to implement the
application of the discrete operator $\Phi^N$ to a vector $\u\in\R^{N^d}$
efficiently using the fast Fourier transformation algorithm. In this section, we
will show how this is used to solve the \textit{fractional Poisson problem} with Dirichlet exterior conditions, i.e., \cref{eq:bvp}, repeated here for convenience:
\begin{equation}
	\text{find }u\text{ s.t. }
	\begin{cases}
		\flap u = f &\text{in } \Omega \\
		u = 0 &\text{in }\R^d\setminus\Omega
	\end{cases}
\end{equation}
where $\Omega \subset [0;1)^d\subset \R^d$ is an arbitrary Lipschitz
domain and $f$ is a given function. 
Originally, our methods operates on the full cube $[0;1)^d$. This leads to the
discretized problem
\begin{equation}
	\label{eq:discrete_system}
	\text{find } \u \in \R^{N^d} \text{ s.t. }\Phi^N \u = \f
\end{equation}
where $\f = (f_k)_{k \in \I_N^d}$, $f_k = f(x_k)$, $x_k = k/N$ and $\Phi^N$ is the
discrete operator from \cref{eq:PHI_1}. As shown,
the application of $\Phi^N$ to a vector $\u\in\R^{N^d}$ can be implemented
efficiently using the fast Fourier transformation algorithm. It is thus feasible
to solve \cref{eq:discrete_system} using iterative methods that work through
subsequent applications of the operator instead of inverting them directly. In
the present case, we use the conjugate gradient method \cite{Hestenes1952}
as it is fast, easy to implement and numerically stable. The same procedure has been used by Duo and Zhang in \cite{Duo2018} to solve the fractional Poisson problem in two-and three dimensions on rectangular (or cuboid) domains using their finite difference method,
and by Minden and Ying \cite{Minden2020} to solve the discrete system they obtain using singularity subtraction. To
overcome the issue of being restricted to the cube $\Omega = [0;1)^d$ and solve 
problems on arbitrary domains $\Omega \subset [0;1)^d$, we embed the domain
into the cube and set the coefficients outside of $\Omega$ to $0$.
To implement this, we introduce a linear mapping
\[
	S_\Omega : \R^{N^d}\longrightarrow\R^{N^d}
\]
such that for all $\u = (u_k)_{k\in\I_N^d} \in\R^{N^d}$
\[
	(S_\Omega \u)_k = \begin{cases}
		u_k&\text{if }k/N\in\Omega\\
		0&\text{otherwise}
	\end{cases}\
\]
holds. Additionally, we define $S_D \coloneqq \id - S_\Omega$ where $\id$ is
the identity on $\R^{N^d}$. Now, as we only want to solve
\cref{eq:discrete_system} for the indices that belong to $\Omega$, we solve 
the modified problem
\begin{equation}
	\label{eq:linear_system}
	\text{find }\u\in\R^{N^d}\text{ s.t. }
	\begin{cases}
		S_\Omega\Phi^N S_\Omega^T\u  = S_\Omega\f \\
		S_D u = 0
	\end{cases}
\end{equation}
instead. Note that this is not a system on $\R^{N^d}$ anymore, but
only on the subspace spanned by the indices that are selected by $S_\Omega$.
The calculations are still done on the full $\R^{N^d}$ as this is the space where
we can apply $\Phi^N$ efficiently, which is the prerequisite for using the
conjugate gradient method.  To show the capabilities of the methods, we apply
it benchmark problems and problems arising from applications.
\section{Numerical Experiments}
\label{sec:numexp}
We present four numerical examples to demonstrate the efficiency of our implementation. First, we compare the scaled periodic fractional Laplacian (see \cref{sub:calculation_periodic_flap}) to the sinc-fractional Laplacian for different scaling factors. \Cref{sub:const_rhs} provides an experimental error analysis for a case where analytic solutions of the Dirichlet problem are known explicitly.
As an example for the importance of using the correct exterior value conditions, we compare the fractional Allen-Cahn evolution equation for the periodically extendend and the Dirichlet case in \cref{sub:allen_cahn}. Finally, in \cref{sub:image_denoising}, we show an application to image denoising as introduced in \cite{AntilBartels2017}.
\subsection{Quadrature rules for the convolution kernel}
\label{sub:experiments_integration_phi_hat}
In this section, we want to numerically evaluate how different quadrature rules
in the calculation of the convolution kernel behave. For that reason, we calculate
the discrete scaled periodic Fractional Laplacian $\flapp_{S,N}$ of a function $u$ with
different scaling factors $S$ and compare it to the sinc-fractional
Laplacian $\flap_{N}$ of $u$ computed using different quadrature rules in the integration in
\cref{eq:quadrature_rules_phi_hat}.

We obtain the simple error estimate
\begin{equation}
	\|\flap u - \flap_N u\| \leq \|\flap u - \flapp_{S,N} u\| + \|\flapp_{S,N}u - \flap_{N} u\|
	\label{eq:error_ext_per}
\end{equation}
From \cref{thm:diff_flappS_flap}, we know that the first term is $\mathcal{O}(S^{-d+2s})$,
and the second term can be calculated using the methods presented in
\cref{sec:discrete_operator,sub:calculation_periodic_flap,sub:experiments_integration_phi_hat}.

In order to obtain good accuracy for calculating the sinc-fractional Laplacian at a
reasonable computational cost, we aim to use better quadrature rules than the
naive one presented in \cref{eq:naive_quadrature}. Basically, we can use any quadrature rule, but for efficient implementation, we
should use the same quadrature rule on each of the $d$-dimensional cubes
$[j_1;j_1+1]\times\cdots\times[j_d;j_d+1]$. This causes some issues as some cubes 
contain an integrand with a singularity due to the factor $|j +
\omega|^{2s}$ if $j_i \in \{-1,0\}$ for some $i \in \{1,\ldots,d\}$.
Nevertheless, we obtain good results using a Gauß-Legendre quadrature rule.
The results are illustrated in \cref{fig:comp_shannon_peri}, where we show the second term in the estimate \cref{eq:error_ext_per}. Due to \cref{thm:equiv_scaledperi_shannon}, a sinc-fractional Laplacian with exactly computed convolution kernel corresponds to a periodic fractional Laplacian with `infinite' scaling. In that case the difference between the sinc-fractional Laplacian and $\flapp_{S,N}$ would decrease at the rate $\mathcal O(S^{-d+2s})$. In the figure, one can see that this rate applies until the quadrature errors from the computation of the convolution kernel dominate. Naturally, this happens later for more exact quadratures.  In the numerical experiments below, we thus employ an $n^d$-point tensor product  Gauß-Legendre quadrature, with $n=7$ for $d=2$ and $n=5$ for $d=3$. \cref{fig:comp_shannon_peri} shows that a 7-point Gauß-Legendre quadrature corresponds roughly to a scale factor $S$ between 40 and 60, depending on $s$.

\begin{figure}
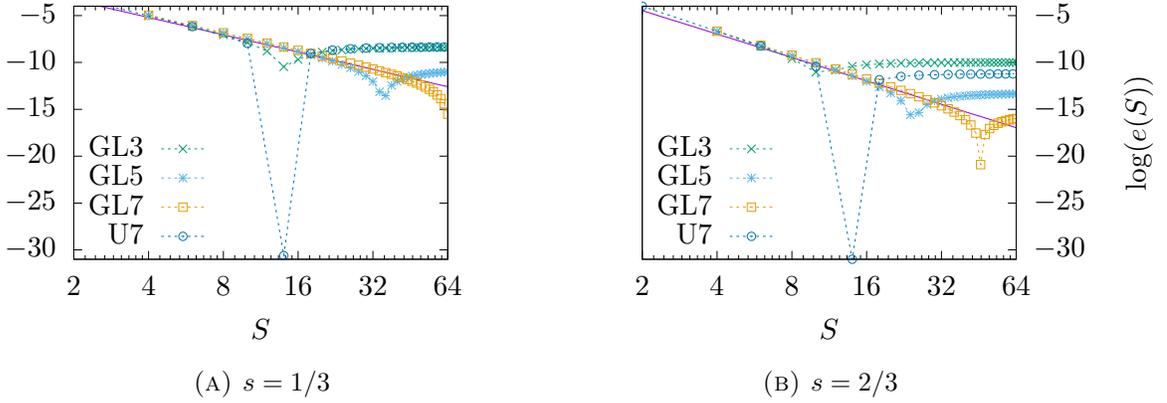

	\begin{subfigure}{.45\textwidth}
	    \centering
			\input{paper-sinc-fraclap-gnuplottex-fig1_tex.tex}
			\caption{$s = 1/3$}
		\end{subfigure}
		\begin{subfigure}{.45\textwidth}
		    \centering
				\input{paper-sinc-fraclap-gnuplottex-fig2_tex.tex}
			\caption{$s = 2/3$}
	\end{subfigure}
	\caption{Comparison of the scaled periodic fractional Laplacian and our implementation 
	of the sinc-fractional Laplacian applied to a standard mollifier with support in $[0;1)^2$ for fixed $N=64$. The difference is measured as 
	$e(S) = \left\|\flap_N u - \flapp_{N,S} u\right\|_{\ell^\infty}$.
	The solid purple line shows the rate $\mathcal O(S^{-d+2s})$. GL3, GL5 and GL7 denote Gauss-Legendre integration with 3, 5 and 7 points in each spatial direction,
    U7 denotes $7$ uniformly spaced quadrature points in each spatial direction. We 
	see that at 
	$S = 2N_Q = 14$, the difference $e(S)$ is practically zero for this quadratue rule.
	This illustrates \cref{thm:equiv_scaledperi_shannon_2}.
	}
	 
	\label{fig:comp_shannon_peri}
\end{figure}

\subsection{Function with constant fractional Laplacian on the unit sphere} \label{sub:const_rhs}
One of the few examples where the solution to the fractional Laplace Dirichlet problem is known explicitly is
the problem
\begin{equation}
	\label{eq:frac_poiss_const_rhs}
	\text{find }u\text{ s.t. }
	\begin{cases}
		\flap u = 1&\text{in }\Omega \\
		u = 0&\text{in }\R^d\setminus\Omega.
	\end{cases}
\end{equation}
For $\Omega = \{ x\in\R^d \,|\, |x| < 1 \}$, the solution to  \cref{eq:frac_poiss_const_rhs} is given by
(see \cite{ClaudiaBucur2016})
\begin{equation}
	\label{eq:sol_frac_poiss_const_rhs}
	u = C_u(d,s)  \max\{0, (1 - |x|^2)\}^s
\end{equation}
where $ C_u(d,s) = \Gamma\left( d/2\right) \cdot (2^{2s} \Gamma\left( d/2+s \right)\Gamma(1+s))^{-1}$.
After shifting and scaling the problem to a disc or a sphere that is a subset of the cube
$[0;1)^d$, and using the method from \cref{sub:solving_the_system}, we obtain the results shown in \cref{fig:first_ex_solver} in
the case $d=2$, which clearly resemble the expected results. 
In \cref{tab:experimental_num_iterations}, we show the number of iterations of the 
conjugate gradient method required  until the residual 
\(
    \|\mathbf r\|_{L^2} = \frac{1}{N^d} \sum_{k\in\I_N^d} r_k^2
\)
dropped below $10^{-8}$and in \cref{tab:experimental_num_iterations_scaling}, we
show how the number of iterations scales w.r.t $N$.
  Since the conditioning of the problem becomes worse for finer grid resolution, it is expected that more iterations are necessary for increasing $N$. We note that for lower fractional exponent $s$, the required number of iterations is lower -- this is reasonable as the largest eigenvalue of the sinc-fractional Laplacian should scale like $N^{2s}$.

\begin{table}
	\caption{Number of CG-iterations needed to solve the discretized linear system.
	The total number of Degrees of Freedom (DoFs) is $\mathcal O(N^d)$.}\label{tab:experimental_num_iterations}
	\begin{center}
		\begin{tabular}{rc|r|r|r|r|r|r}
			&$N$ & $s = 1/4$ & $s = 1/3$ & $s = 1/2$ & $s = 2/3$ & $s = 3/4$ & $s=1$ \\ \hline\hline
\multirow{10}{*}{$d=2$} &    8 &  8 &   8 &   8 &    8 &    8 &    8 \\ \cline{2-8}
												&   16 & 14 &  17 &  21 &   24 &   26 &   27 \\ \cline{2-8}
												&   32 & 19 &  24 &  34 &   45 &   51 &   63 \\ \cline{2-8}
												&   64 & 25 &  32 &  48 &   75 &   91 &  132 \\ \cline{2-8}
												&  128 & 31 &  43 &  76 &  127 &  161 &  271 \\ \cline{2-8}
												&  256 & 40 &  57 & 112 &  208 &  281 &  545 \\ \cline{2-8}
												&  512 & 50 &  75 & 163 &  340 &  488 & 1089 \\ \cline{2-8}
												& 1024 & 61 &  97 & 234 &  550 &  882 & 2056 \\ \cline{2-8}
												& 2048 & 76 & 127 & 343 & 1021 & 1660 & 4257 \\ \cline{2-8}
												& 4096 & 94 & 166 & 535 & 1668 & 3010 & 8406 \\ \hline \hline
\multirow{6}{*}{$d=3$}  &    8 & 10 &  11 &  12 &   13 &   13 &   13 \\ \cline{2-8}
												&   16 & 14 &  16 &  22 &   28 &   31 &   41 \\ \cline{2-8}
												&   32 & 18 &  23 &  33 &   48 &   57 &   92 \\ \cline{2-8}
												&   64 & 23 &  30 &  49 &   79 &  101 &  189 \\ \cline{2-8}
												&  128 & 29 &  40 &  73 &  128 &  174 &  386 \\ \cline{2-8}
												&  256 & 36 &  53 & 107 &  211 &  299 &  795 \\ \cline{2-8}
		\end{tabular}
	\end{center}
\end{table}

\begin{table}[h!]
	\caption{Scaling exponent $\beta$ of the number $N^\beta$ of CG-iterations depending on grid size $N$. }
	\label{tab:experimental_num_iterations_scaling}
	\begin{center}
		\begin{tabular}{|rr|c|c|c|c|c|c|}\hline 
				& $s$ &1/4	&	1/3 & 1/2	& 2/3 & 3/4 & 1 \\ \hline 
				 \multirow{2}{*}{$\beta$}& $d = 2$    & 0.27  & 0.36 & 0.55 & 0.71 & 0.76 & 1.02 \\ \cline{3-8}
				 & $d = 3$    &  0.32 & 0.41 & 0.56 & 0.71 & 0.78 & 1.04 \\ \hline
		\end{tabular}
	\end{center}
\end{table}
In \cref{tab:t_solve_combined}, we present the time that was required solve the system.
The time grows when $s$ grows, reflecting the fact that we need more CG-iterations in this case.
We implemented the algorithms in \verb|C++| using the FFTW library \cite{FFTW05} and the 
experiments where run on a standard office computer (6-core Intel Core i5-9500, 3.00 Ghz).
In addition to the time that is needed to actually solve the system, one has to setup
the operator $\Phi^N$. This convolution kernel has to be computed only once for each $d$, $s$ and mesh size $N$. It is independent of the domain $\Omega \subset [0,1)^d$. Especially for large values of $s$, this time is small compared
to the time that is needed to actually solve the system. We provide the details
in \cref{tab:t_setup_combined}.
\begin{table}
	\caption{Time (in seconds) needed to solve the discretized system. These are the
	actual times it took to compute the results used for \cref{fig:error_analysis_2d}.
	}\label{tab:t_solve_combined}
		\resizebox{\textwidth}{!}{%
			\begin{tabular}{cr|r|r|r|r|r|r|r}
				&$N$ & \#DoF & {$s = 1/4$} & {$s = 1/3$} & {$s = 1/2$} & {$s = 2/3$} & {$s = 3/4$} & {$s = 1$} \\ \hline\hline
			\multirow{8}{*}{$d=2$}&   32 &        651 & 1.0e$-$3 & 1.0e$-$3 & 2.0e$-$3 & 2.0e$-$3 & 3.0e$-$3 & 4.0e$-$3\\ \cline{2-9}
			                      &   64 &      2,605 & 4.0e$-$3 & 5.0e$-$3 & 7.0e$-$3 & 1.2e$-$2 & 1.3e$-$2 & 2.7e$-$2\\ \cline{2-9}
			                      &  128 &     10,423 & 1.4e$-$2 & 1.8e$-$2 & 3.2e$-$2 & 5.3e$-$2 & 7.2e$-$2 & 1.5e$-$1\\ \cline{2-9}
			                      &  256 &     41,692 & 5.1e$-$2 & 7.3e$-$2 & 1.4e$-$1 & 2.8e$-$1 & 3.8e$-$1 & 1.0e$+$0\\ \cline{2-9}
			                      &  512 &    166,768 & 3.8e$-$1 & 5.6e$-$1 & 1.2e$+$0 & 2.6e$+$0 & 3.8e$+$0 & 1.2e$+$1\\ \cline{2-9}
			                      & 1024 &    667,075 & 1.9e$+$0 & 3.0e$+$0 & 7.0e$+$0 & 1.7e$+$1 & 2.7e$+$1 & 1.0e$+$2\\ \cline{2-9}
			                      & 2048 &  2,668,300 & 1.1e$+$1 & 1.8e$+$1 & 4.4e$+$1 & 1.2e$+$2 & 1.9e$+$2 & 8.5e$+$2\\ \cline{2-9}
			                      & 4096 & 10,673,203 & 5.8e$+$1 & 1.0e$+$2 & 3.2e$+$2 & 9.8e$+$2 & 1.7e$+$3 & 8.7e$+$3\\ \hline \hline
	      \multirow{5}{*}{$d=3$}  &   16 &      1,563 & 7.0e$-$3 & 8.0e$-$3 & 1.0$-$2 & 1.3e$-$2 & 1.5$-$2 & 2.0e$-$2\\ \cline{2-9}
                                  &   32 &     12,507 & 9.9e$-$2 & 1.3e$-$1 & 1.8$-$1 & 2.6e$-$1 & 3.1$-$1 & 5.3e$-$1 \\ \cline{2-9}
                                  &   64 &    100,061 & 1.4e$+$0 & 1.8e$+$0 & 3.0$+$0 & 4.8e$+$0 & 6.0$+$0 & 1.1e$+$1 \\ \cline{2-9}
                                  &  128 &    800,490 & 1.9e$+$1 & 2.6e$+$1 & 4.8$+$1 & 8.1e$+$1 & 1.1$+$2 & 2.5e$+$2 \\ \cline{2-9}
                                  &  256 &  6,403,922 & 2.0e$+$2 & 3.0e$+$2 & 6.9$+$2 & 1.3e$+$3 & 2.1$+$3 & 5.5e$+$3 \\ \cline{2-9}
		\end{tabular}}
\end{table}

\begin{table}
	\caption{Time (in seconds) needed to setup $\Phi^N$ in $d = 2$ and $d=3$ dimensions using a $7^2$ ($d=2$) or $5^3$ ($d=3$) point Gauß-Legendre quadrature. Notice that $\Phi^N$ is applied to a vector of size $N^d$ and that $\Phi^N$ has 
	$(2N)^d$ entries}.\label{tab:t_setup_combined}
		\resizebox{.95\textwidth}{!}{%
			\begin{tabular}{lc|r|r|r|r|r|r}
				&{$N$} & {$s = 1/4$} & {$s = 1/3$} & {$s = 1/2$} & {$s = 2/3$} & {$s = 3/4$} & {$s = 1$} \\ \hline\hline
\multirow{10}{*}{$d=2$}  &    8 & 1.4e$-$2 & 1.5e$-$2 & 1.4e$-$2 & 1.5e$-$2 & 1.5e$-$2 & 1.4e$-$2  \\ \cline{2-8}
											 &   16 & 5.2e$-$2 & 5.1e$-$2 & 5.1e$-$2 & 5.2e$-$2 & 5.2e$-$2 & 5.0e$-$2  \\ \cline{2-8}
											 &   32 & 1.1e$-$1 & 1.1e$-$1 & 9.7e$-$2 & 1.1e$-$1 & 1.1e$-$1 & 9.8e$-$2  \\ \cline{2-8}
											 &   64 & 3.1e$-$1 & 3.0e$-$1 & 2.7e$-$1 & 3.0e$-$1 & 3.0e$-$1 & 2.7e$-$1  \\ \cline{2-8}
											 &  128 & 9.5e$-$1 & 9.5e$-$1 & 8.2e$-$1 & 9.5e$-$1 & 9.5e$-$1 & 8.2e$-$1  \\ \cline{2-8}
											 &  256 & 3.4e$+$0 & 3.4e$+$0 & 2.9e$+$0 & 3.4e$+$0 & 3.4e$+$0 & 2.9e$+$0  \\ \cline{2-8}
											 &  512 & 1.3e$+$1 & 1.3e$+$1 & 1.1e$+$1 & 1.3e$+$1 & 1.3e$+$1 & 1.1e$+$1  \\ \cline{2-8}
											 & 1024 & 5.2e$+$1 & 5.2e$+$1 & 4.3e$+$1 & 5.2e$+$1 & 5.2e$+$1 & 4.3e$+$1  \\ \cline{2-8}
											 & 2048 & 2.1e$+$2 & 2.1e$+$2 & 1.7e$+$2 & 2.1e$+$2 & 2.1e$+$2 & 1.7e$+$2  \\ \cline{2-8}
											 & 4096 & 8.2e$+$2 & 8.2e$+$2 & 6.9e$+$2 & 8.2e$+$2 & 8.2e$+$2 & 6.9e$+$2  \\ \hline\hline
\multirow{6}{*}{$d=3$}   &    8 & 1.8e$-$1 & 1.8e$-$1 & 1.5e$-$1 & 1.8e$-$1 & 1.8e$-$1 & 1.6e$-$1  \\ \cline{2-8}
			                 &   16 & 1.4e$+$0 & 1.4e$+$0 & 1.2e$+$0 & 1.4e$+$0 & 1.4e$+$0 & 1.2e$+$0  \\ \cline{2-8}
			                 &   32 & 1.1e$+$1 & 1.1e$+$1 & 9.4e$+$0 & 1.1e$+$1 & 1.1e$+$1 & 9.4e$+$0  \\ \cline{2-8}
			                 &   64 & 8.7e$+$1 & 8.7e$+$1 & 7.6e$+$1 & 8.6e$+$1 & 8.7e$+$1 & 7.6e$+$1  \\ \cline{2-8}
			                 &  128 & 7.1e$+$2 & 7.1e$+$2 & 6.2e$+$2 & 7.1e$+$2 & 7.1e$+$2 & 6.2e$+$2  \\ \cline{2-8}
			                 &  256 & 5.7e$+$3 & 5.7e$+$3 & 5.0e$+$3 & 5.7e$+$3 & 5.7e$+$3 & 5.0e$+$3  \\ \cline{2-8}
		\end{tabular}}
\end{table}

\Cref{fig:first_ex_solver3d} shows the results for $d=3$. In this case, we used only a $5$-point Gauss-Legendre rule in order to reduce computation time. 

\begin{figure}
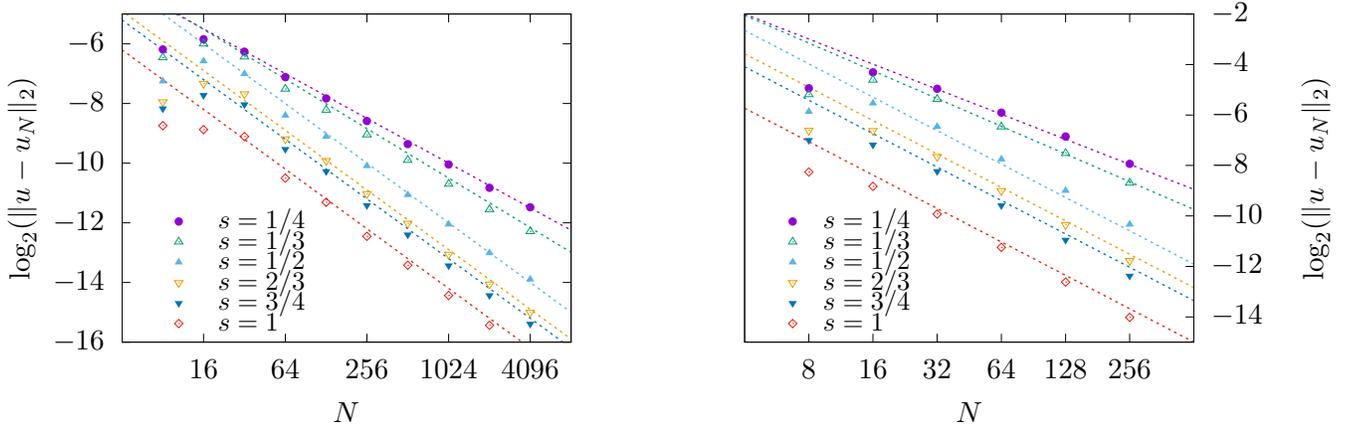

    \centering
		\begin{subfigure}{.5\textwidth}
		    \centering
				\input{paper-sinc-fraclap-gnuplottex-fig3_tex.tex}
		\end{subfigure}%
		\begin{subfigure}{.5\textwidth}
		    \centering
				\input{paper-sinc-fraclap-gnuplottex-fig4_tex.tex}
		\end{subfigure}
	\caption{Experimental convergence analysis as a log-log plot in 2 dimensions (left) and 3 dimensions (right). 
	The decay conforms to the rates predicted  in \cite{Acosta2017, Borthagaray2018, Ainsworth2018}, displayed as dashed lines in the plots. $N$ is the number of
	grid points in each direction, i.e., the total number of grid points is $N^2$
	and $N^3$ in respectively.}
	\label{fig:error_analysis_2d}
\end{figure}

As a numerical analysis of our method is still pending and remains part of future
work, we experimentally evaluate the capabilities of our method. For that, we
solve problem \cref{eq:frac_poiss_const_rhs} for different values of $s$ on
grids of increasing size $N^d$ for $d = 2$ and $d = 3$. We approximate the $L_2$-error as
\[
	\left\|u_N - u\right\|_2 \approx \sqrt{\frac{1}{N^d} \sum\limits_k (u_k - u(x_k))^2},
\]
where $u_N = \sum_{k\in \I_N^d} u_k \varphi^N_k$ is the solution computed using the sinc-fractional Laplacian and $u$ is the known analytic solution.
The results can be seen in \cref{fig:error_analysis_2d} (left) for the $2d$-case
and in \cref{fig:error_analysis_2d} (right) for the $3d$-case.
We experimentally obtain the convergence rates shown in
\cref{tab:experimental_gamma}. 
In \cite{Acosta2017}, Acosta and Borthagaray proved for their finite element implementation
the convergence rate $\mathcal O(h^{1-\eps})$ for mesh size $h=1/N$ in the $H^s(\Omega)$-norm
under appropriate smoothness assumptions on the domain $\Omega$. Using an Aubin-Nitsche
argument, the convergence in the $L^2(\Omega)$-norm is $\mathcal{O}\left(h^{\min(1, s+1/2)}\right)$
\cite{Borthagaray2018, Ainsworth2018}, modulo $\eps$ or a logarithmic correction.
The rates that we obtain clearly recover this rate for $s<1$.  Our method can also treat the case $s=1$, i.e., the standard Laplacian. Notice, however, the reduced rate in \cref{tab:experimental_gamma}. This is due to the fact that the exact solution $u \notin H^2(\R^d)$.

\begin{table}[h!]
	\caption{Experimentally determined convergence rates.}
	\label{tab:experimental_gamma}
	\begin{center}
		\begin{tabular}{rr|c|c|c|c|c|c}\hline 
			 & $s$&1/4	&	1/3 & 1/2	& 2/3 & 3/4 & 1\\ \hline 
				\multirow{2}{*}{Determined rate} & $d = 2$  & 0.7329 & 0.8192 & 0.9622 & 1.0166 & 1.0189 & 1.0126  \\ \cline{3-8} 
																				 & $d = 3$  & 0.7439 & 0.8324 & 0.9725 & 1.0360 & 1.0425 & 1.0306 \\ \hline
				Expected rate   & & 0.75 & $0.8\bar 3$ & 1.00 & 1.00 &  1.00 & 1.00\\ \hline
		\end{tabular}
	\end{center}
\end{table}

\begin{figure}[h!]
	\begin{subfigure}{.48\textwidth}
		\includegraphics[width=\textwidth]{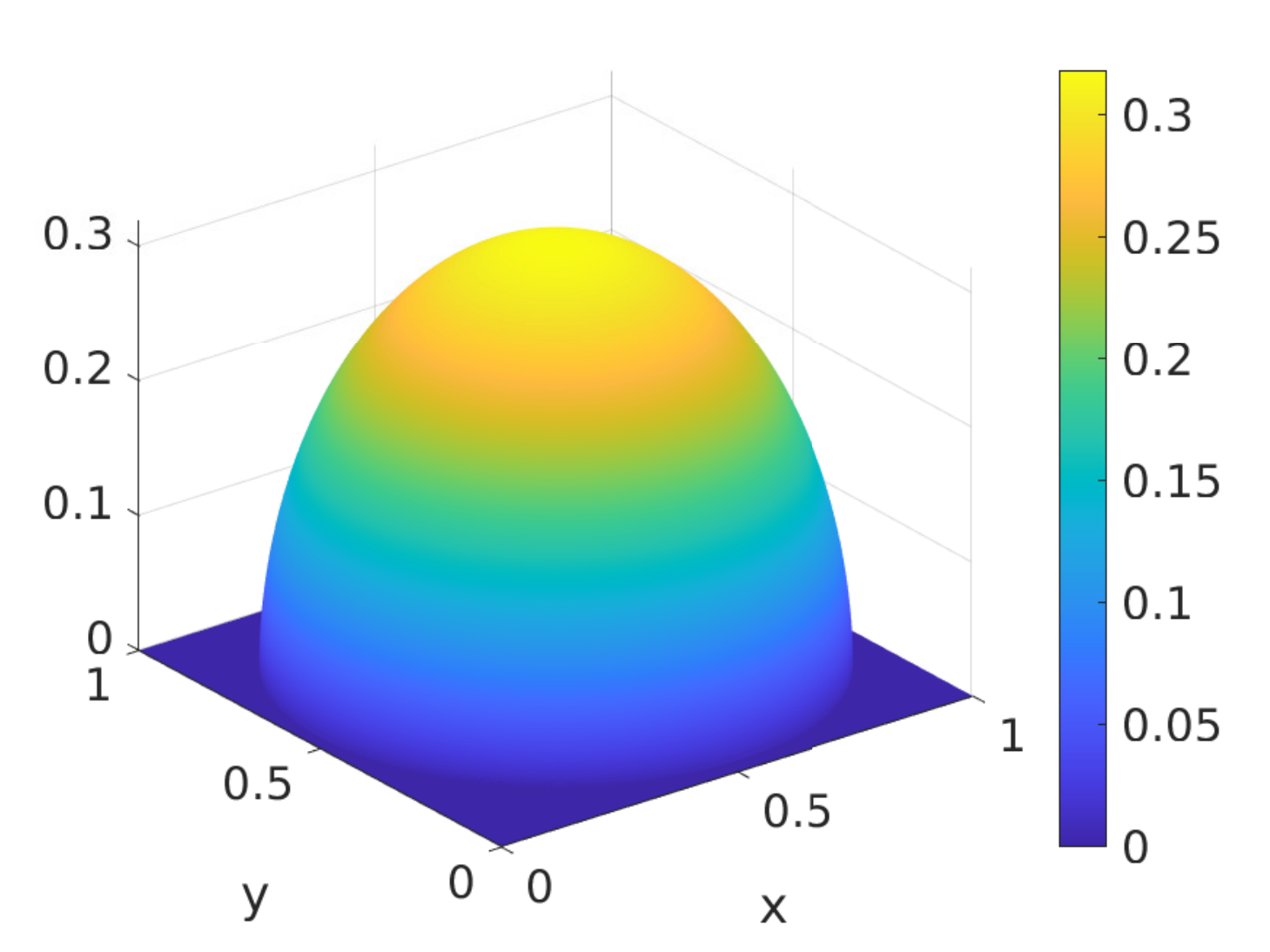}
		\caption{$\u$}
	\end{subfigure}
	\hfill
	\begin{subfigure}{.48\textwidth}
		\includegraphics[width=\textwidth]{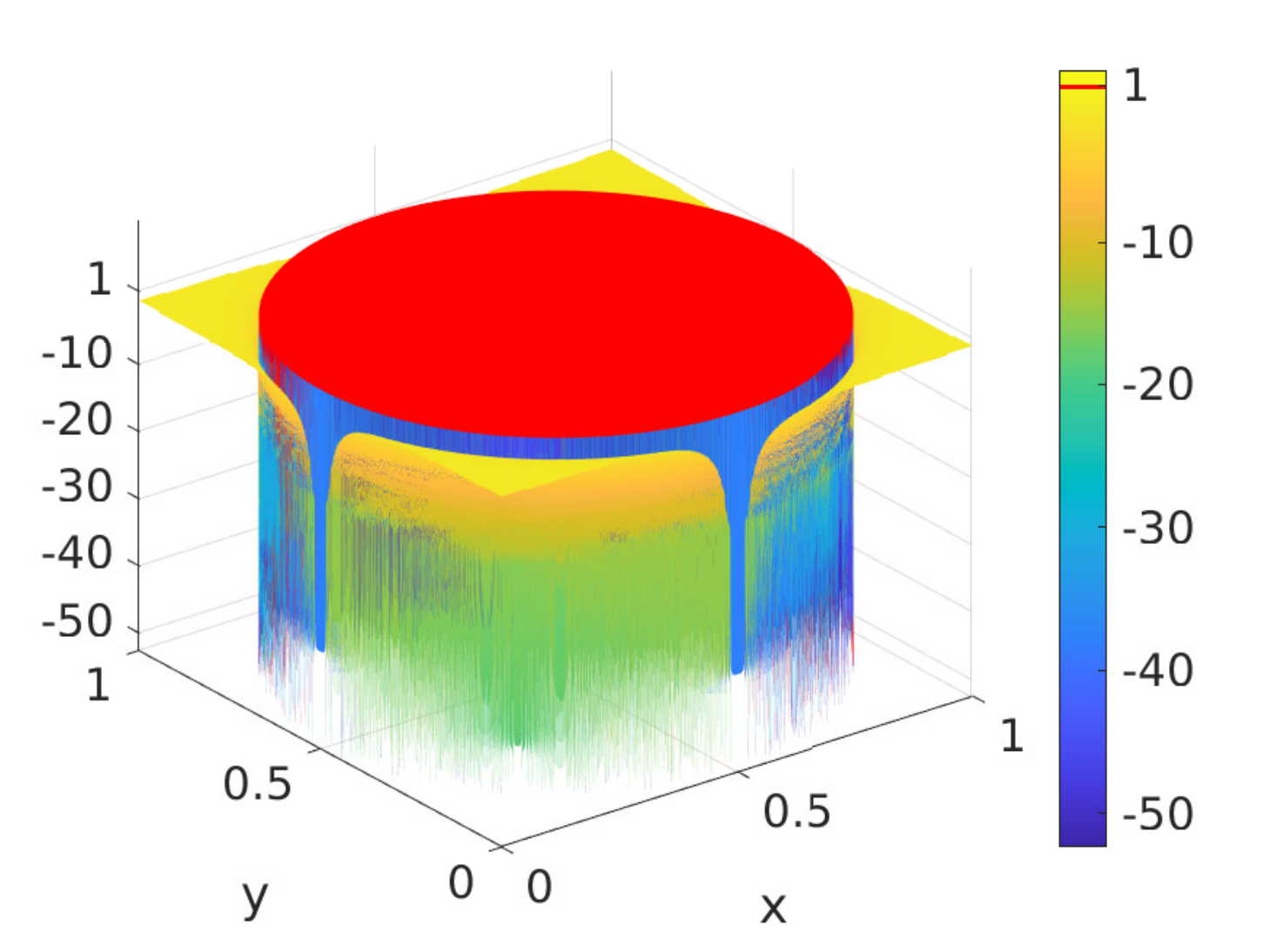}
		\caption{$\Phi\u$}
	\end{subfigure}
	\caption{Discrete solution $\u$ (a) and $\Phi\u$ (b) for problem
	\cref{eq:frac_poiss_const_rhs} with $s = 1/2$ using $2^{12}\times 2^{12}$ grid
	points. The solution clearly resembles the expected solution, given in
	\cref{eq:sol_frac_poiss_const_rhs}. In (b), values closer than $\varepsilon = 10^{-5}$ to $1$ are colored red to show that the sinc-fractional Laplacian is constant in the correct region.} \label{fig:first_ex_solver}
\end{figure}

\begin{figure}[h!]
	\begin{subfigure}{.48\textwidth}
		\includegraphics[width=\textwidth]{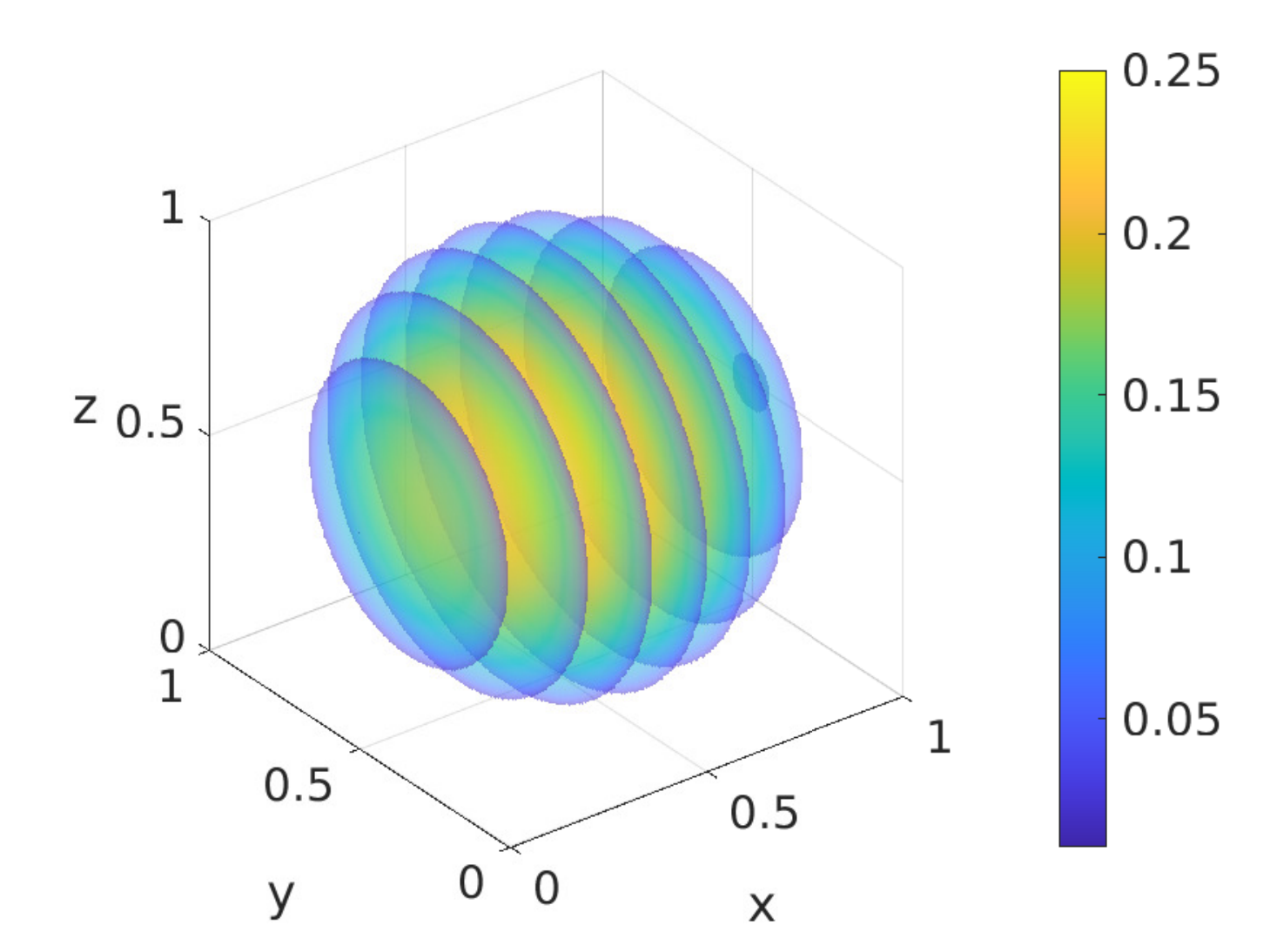}
		\caption{$\u$}
	\end{subfigure}
	\hfill
	\begin{subfigure}{.48\textwidth}
		\includegraphics[width=\textwidth]{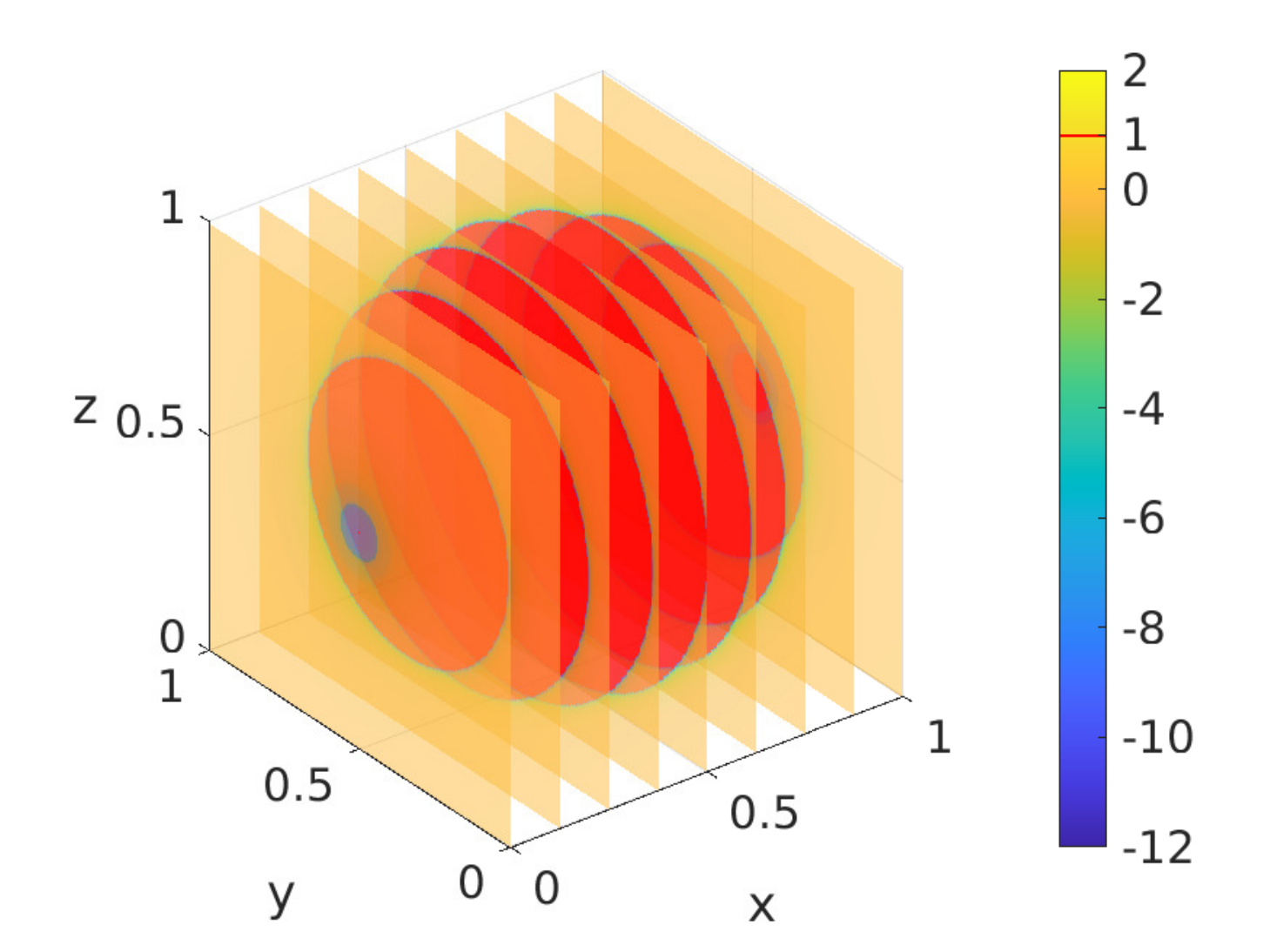}
		\caption{$\Phi\u$}
	\end{subfigure}
	\caption{Discrete solution $\u$ (a) and $\Phi\u$ (b) for problem
	\cref{eq:frac_poiss_const_rhs} with $s = 1/2$ using $(2^{8})^3$ grid points. In (b), values closer than $\varepsilon = 10^{-5}$ to $1$ are colored red to show that the sinc-fractional Laplacian is constant in the correct region.
	}
	\label{fig:first_ex_solver3d}
\end{figure}

\subsection{Function with constant fractional Laplacian on an L-shaped domain}
To show that we can  treat domains other than the sphere, we solve
the boundary value problem with constant right-hand side on an L-shaped
domain. The results are provided in \cref{fig:bvp_lshaped} and visibly
resemble the numerical solutions provided e.g. in \cite{Lischke2020}. \\
If $\Omega$ is not the unit sphere anymore, then the solution to \cref{eq:frac_poiss_const_rhs} is 
not available analytically. 
Therefore, we perform a numerical error analysis where we compare the solution on a fine mesh with $N = 2^{12}$ points in each spatial direction to solutions on coarser meshes. The results of our computations can bee seen in \cref{fig:bvp_lshaped}. We present the approximated $L^2$-errors in \cref{fig:error_analysis_2d_Lshape}. 

\begin{figure}[h!]
	\begin{subfigure}{.48\textwidth}
		\includegraphics[width=\textwidth]{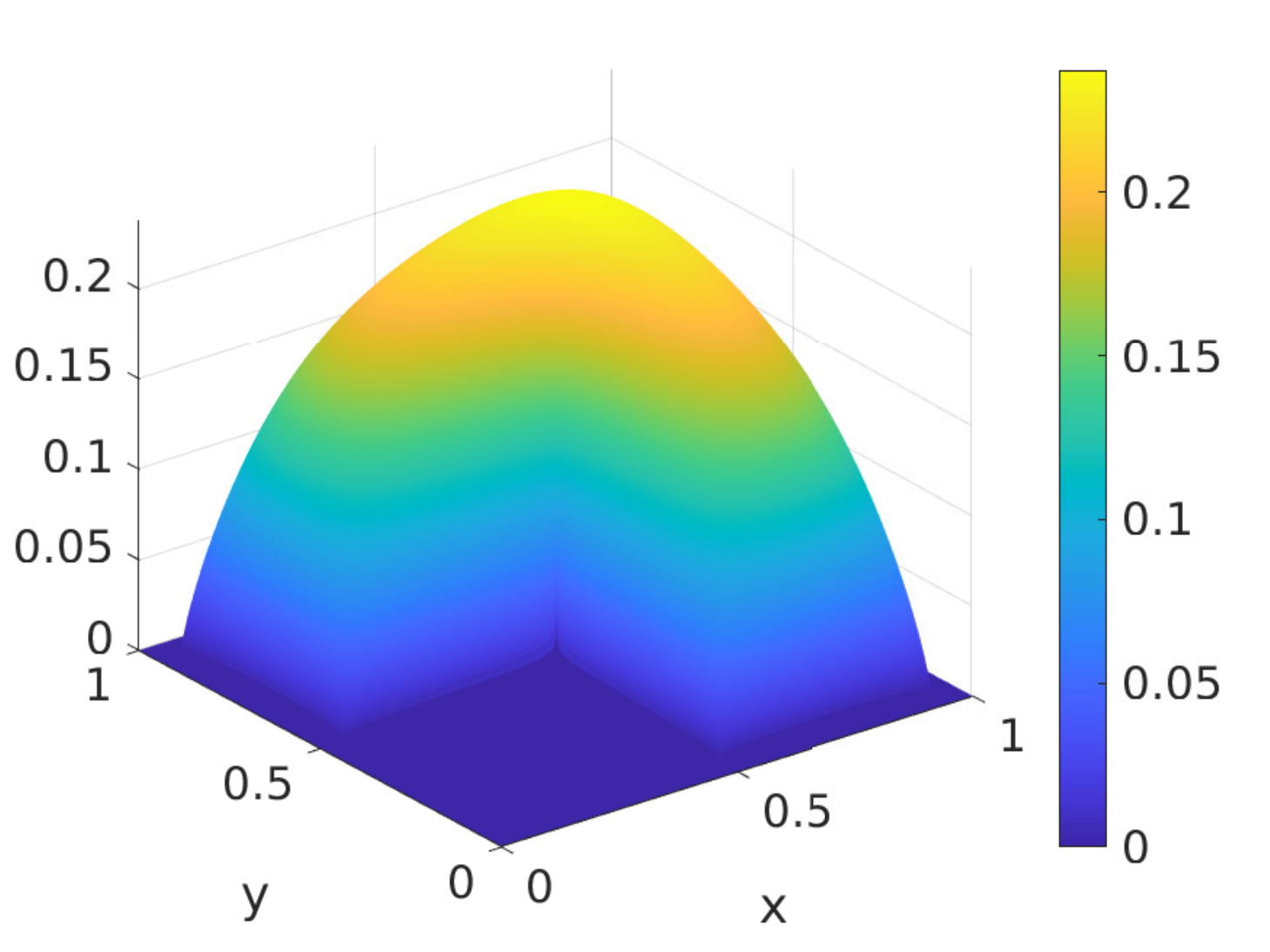}
		\caption{$\u$}
	\end{subfigure}
	\hfill
	\begin{subfigure}{.48\textwidth}
		\includegraphics[width=\textwidth]{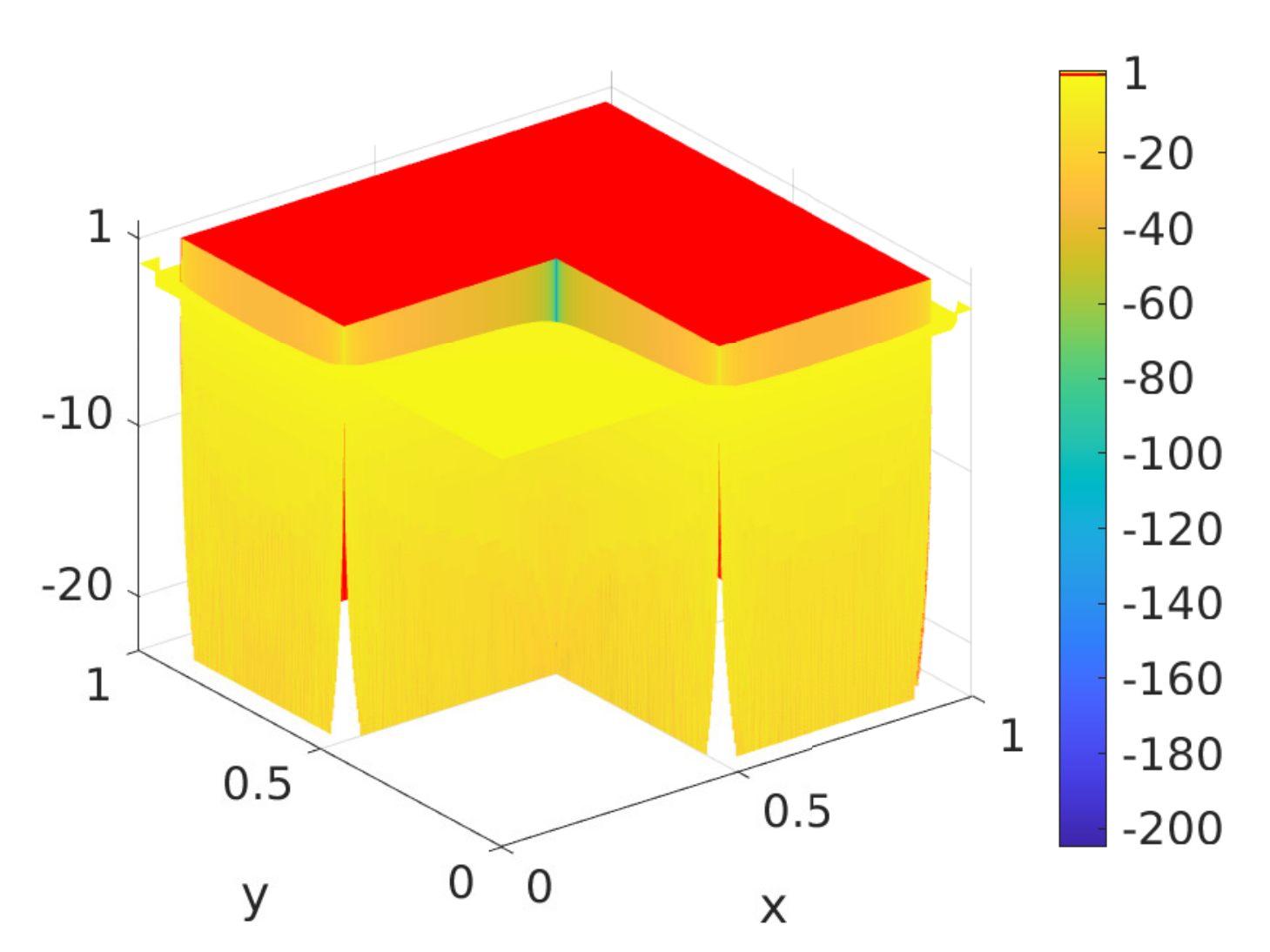}
		\caption{$\Phi\u$}
	\end{subfigure}
	\caption{Discrete solution $\u$ (a) and $\Phi\u$ (b) for problem
	\cref{eq:frac_poiss_const_rhs} with $s = 1/2$ on an L-shaped domain using $2^{12}\times 2^{12}$ grid points . In (b), values closer than $\varepsilon = 10^{-5}$ to $1$ are colored red to show that the sinc-fractional Laplacian is constant in the correct region. The fractional Laplacian exhibits a strong singularity at the inside corner.}
	\label{fig:bvp_lshaped}
\end{figure}
\begin{figure}
\centering
			\input{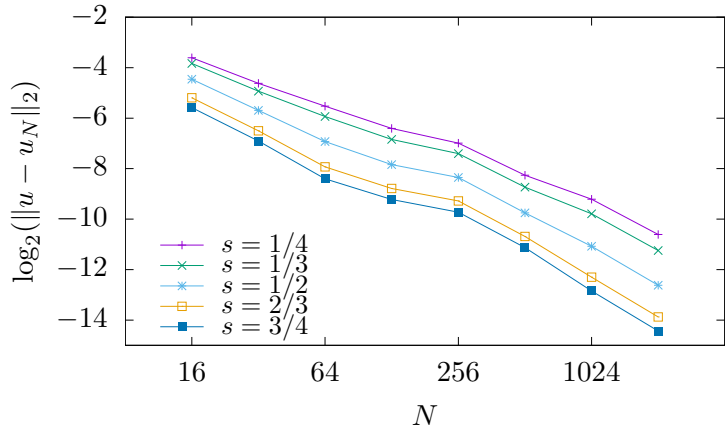}
	\caption{Experimental convergence analysis for the boundary value problem on an
	L-shaped domain. The solution on the coarse grids ($N = 2^2,\cdots,2^{11}$ in each spatial direction) where compared to the solution on
	a the finest grid ($N = 2^{12}$ in each spatial direction).}
	\label{fig:error_analysis_2d_Lshape}
\end{figure}
 
\subsection{Fractional Allen-Cahn equation} \label{sub:allen_cahn}
As a practical application that shows that our method correctly implements the
Dirichlet exterior value conditions instead of periodic exterior value conditions,
we calculate the evolution of the fractional Allen-Cahn equation
\[
	\partial_t u + (-\Delta)^{\frac{1}{2}} u  = - \frac{1}{\eps}  W'(u)
\]
for $\eps=2\cdot 10^{-3}$; as an example here for fractional exponent $s=\frac{1}{2}$. The function $W\colon \R\to\R$ is a typical quartic double well potential of the form $W(u)=\frac{1}{4}u^2(u-1)^2$. 
It has recently been proved \cite{Alberti:1994ui, Savin:2012fl} that, for $\eps\to 0$, the associated energy $\frac{1}{\log \eps} \left([u]_{H^{\frac{1}{2}}}^2 + \frac{1}{\eps}W(u)\right)$ converges (modulo constants) in the sense of $\Gamma$-convergence to the perimeter (in our one-dimensional case, a jump set counting functional). Heuristically, this energy prefers states of $u\in\{0,1\}$. The fractional Sobolev norm ensures that transitions between these two states can not take place arbitrarily rapidly in space.

The gradient flow, accelerated by a factor of $\frac{1}{\eps\log\eps}$ as computed here, converges in one spatial dimension to a kink-antikink annihilation-type dynamic \cite{Gonzalez:2012wp}. Again, heuristically, two nearby states close to $u=+1$, separated by a gap where $u=0$, attract each other due to the long range interaction via the fractional operator, so the two phase transitions (or kinks) move closer to each other. The expected behavior in the $\eps\to 0$ limit is a kink velocity proportional to the reciprocal of the distance to the antikink (and vice-versa).

Note that the evolution for small $\eps>0$ is substantially faster than for the classical local Allen-Cahn equation, where exponentially slow kink-antikink annihilation was shown \cite{BRONSARD:1990is,doi:10.1002/cpa.3160420502}.

We discretize the equation in time using an implicit Euler scheme with time step $\tau$ and obtain 
\begin{equation}
	\label{eq:sys_frac_ac}
	\left(\mathbf 1 + \tau(-\Delta)^{\frac{1}{2}}\right)\u^{t+1} = - \frac{\tau}{\eps} W'\left(\u^t\right) + \u^t.
\end{equation}

To illustrate the differences due to exterior domain condition, we choose either the periodic fractional
Laplacian or the sinc-fractional Laplacian and compare. In the first case, the system
\cref{eq:sys_frac_ac} can be solved directly using the discrete Fourier
transformation, see, e.g., \cite{AntilBartels2017} for details. In the case of the sinc-fractional Laplacian, we use the conjugate gradient method as explained in \cref{sub:solving_the_system}. We choose the domain $\Omega = [0;1) \subset \R$ and
\(
	\u^0(x) = \chi_{[1/4; 3/4]}(x),
\)
i.e., the indicator function on the interval $\left[\frac{1}{4}; \frac{3}{4} \right]$. In \cref{fig:data_evol_frac_ac_1d}, we show the evolution for different times. It can be clearly seen that the solution for periodic conditions reaches a steady state, due to symmetry: the attraction of the kink and antikink at $x=\frac{1}{4}$ and $x=\frac{3}{4} $, respectively is balanced by the attraction to their periodic mirror images past the domain boundary. In the case of the true Dirichlet problem, the annihilation is clearly visible. \Cref{fig:mass_evol_frac_ac_1d} shows the time-evolution of the total mass (i.e., $m(t)=\int_0^1 u(x,t)\,\mathrm{d}x)$ as well as the position of the left kink (or phase transition) over time. The fit to the solution of the aforementioned limit equation (which is of the form $a\sqrt{t_0-t}$ with parameters $a$ and $t_0$) is included on the left panel in \cref{fig:mass_evol_frac_ac_1d}.

\begin{figure}[h!]
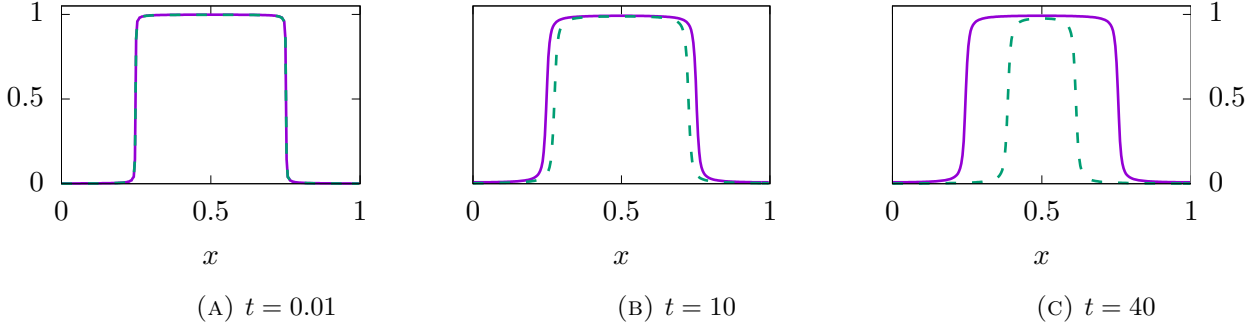

	\begin{center}
	\begin{subfigure}{.33\textwidth}
			\input{paper-sinc-fraclap-gnuplottex-fig6_tex.tex}
		\caption{$t = 0.01$}
	\end{subfigure}%
	\begin{subfigure}{.33\textwidth}
		\input{paper-sinc-fraclap-gnuplottex-fig7_tex.tex}
		\caption{$t = 10$}
	\end{subfigure}
	\begin{subfigure}{.33\textwidth}
		\input{paper-sinc-fraclap-gnuplottex-fig8_tex.tex}
		\caption{$t = 40$}
	\end{subfigure}
	\caption{The solution $u(x, t)$ of the fractional Allen-Cahn equation for different times $t$.
		The dashed geen line shows the evolution for the sinc-fractional Laplacian
		and the solid purple line the evolution for the periodic fractional Laplacian.}
		\label{fig:data_evol_frac_ac_1d}
		
	\end{center}
\end{figure}

\begin{figure}[h!]
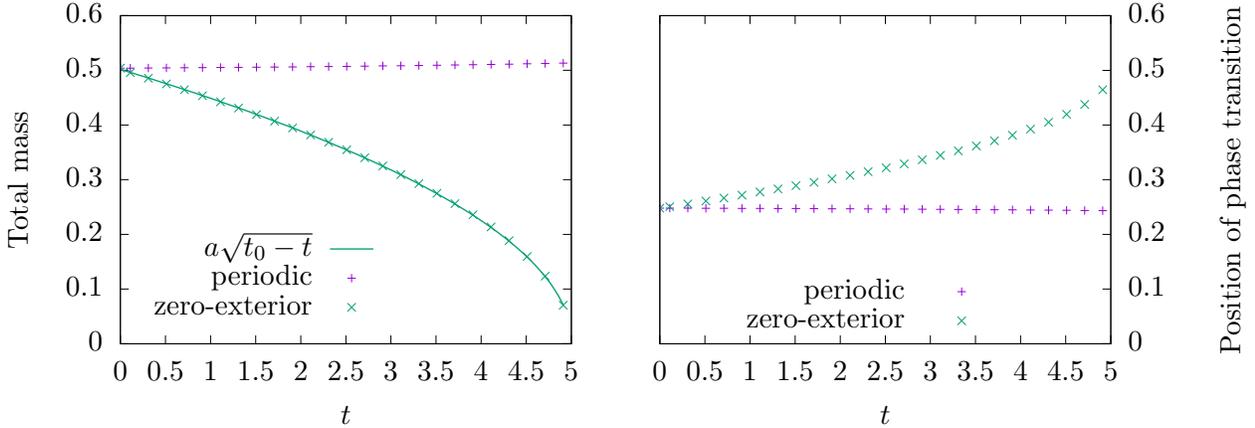

		\begin{subfigure}{.49\textwidth}
			\centering
			\input{paper-sinc-fraclap-gnuplottex-fig9_tex.tex}
		\end{subfigure}
		\begin{subfigure}{.49\textwidth}
			\input{paper-sinc-fraclap-gnuplottex-fig10_tex.tex}
		\end{subfigure}
		\caption{Evolution of the total mass (i.e., the integral of $u$) in the interval $[0;1]$
		(left) and of the position of the first phase transition (right) using both the periodic and the zero-exterior value Dirichlet (sinc)-fractional Laplacian for the fractional Allen-Cahn Equation. The continuous line in the left panel shows a fit to a square-root function $a\sqrt{t_0 - t}$, with $a = 0.2242, t_0 = 5.0104$.}
		\label{fig:mass_evol_frac_ac_1d}
\end{figure}

\subsection{Image denoising} \label{sub:image_denoising}
In \cite{AntilBartels2017}, Antil and Bartels have proposed to solve an image denoising variational problem. Given a noisy image $g$, it amounts to:
\[
	\min_{u} \frac12 \int_\Omega |(-\Delta)^{\frac{s}2}u|^2 + \frac{\alpha}{2}\int_\Omega |u-g|^2,
\]
i.e., they use the fractional Laplacian as the regularizer. Here $\alpha > 0$ is the regularization parameter. Starting from the seminal work of Rudin-Osher-Fatemi  \cite{Rudin1992}, where they used the total variation as a regularizer, such variational models are being regularly used in imaging science. The key advantage of the fractional Laplacian regularizer from \cite{AntilBartels2017} is the fact that one arrives at the following \emph{linear} Euler-Lagrange equations:
\begin{equation}
	\label{eq:image_denoise}
	\flap u + \alpha (g-u) = 0.
\end{equation}
In the respective work, the authors use the spectral
fractional Laplacian which applies periodic boundary conditions. In contrast,
we use our method that uses Dirichlet-exterior value conditions. Here, we
subtract the mean $\bar g$ of $g$ from $g$ before the calculations and add it
back afterwards. The results can be seen in \cref{fig:g_denoised_u}. The goal
of this example is not to further illustrate the effectiveness of fractional
Laplacian as a regularizer, but to show that we can obtain comparable results
using the approach considered in this paper. This clearly follows from our
example.

\begin{figure}[h!]
	\begin{minipage}[c]{.49\textwidth}
		\begin{subfigure}{.49\textwidth}
				 \centering
			\includegraphics[width=\textwidth]{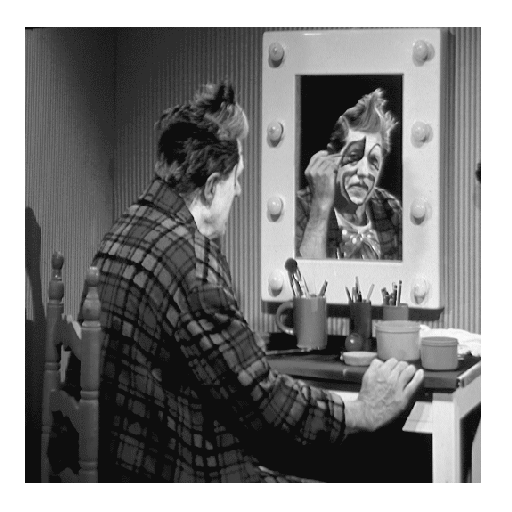}
			\caption{}
		\end{subfigure}
		\begin{subfigure}{.49\textwidth}
				 \centering
			\includegraphics[width=\textwidth]{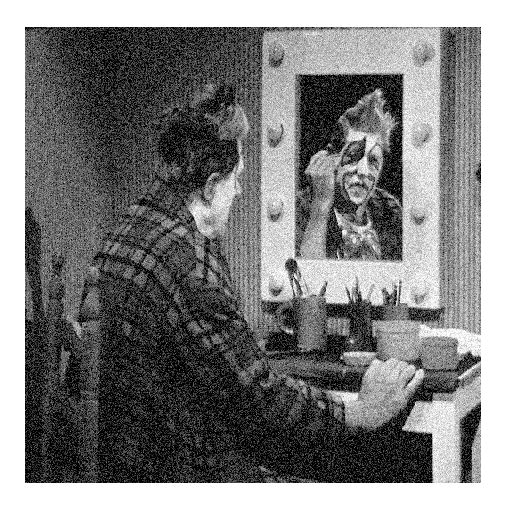}
			\caption{}
		\end{subfigure}
		\begin{subfigure}{.49\textwidth}
				 \centering
			\includegraphics[width=\textwidth]{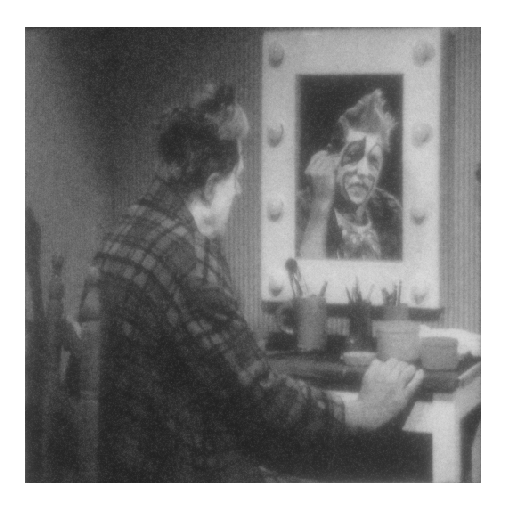}
			\caption{}
		\end{subfigure}
		\begin{subfigure}{.49\textwidth}
			\includegraphics[width=\textwidth]{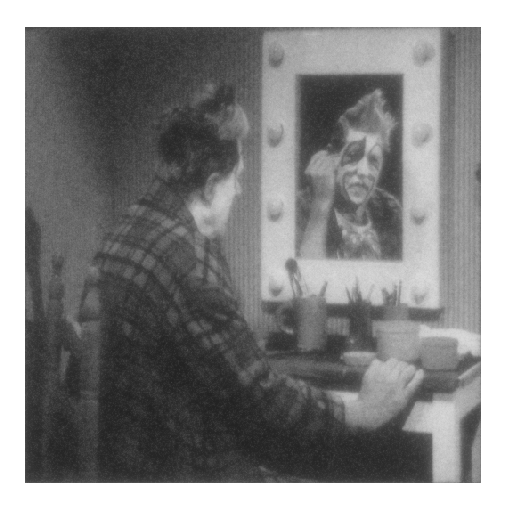}
			\caption{}
		\end{subfigure}
	\end{minipage}
	\begin{minipage}[c]{.49\textwidth}
		\begin{subfigure}{\textwidth}
			\includegraphics[width=\textwidth]{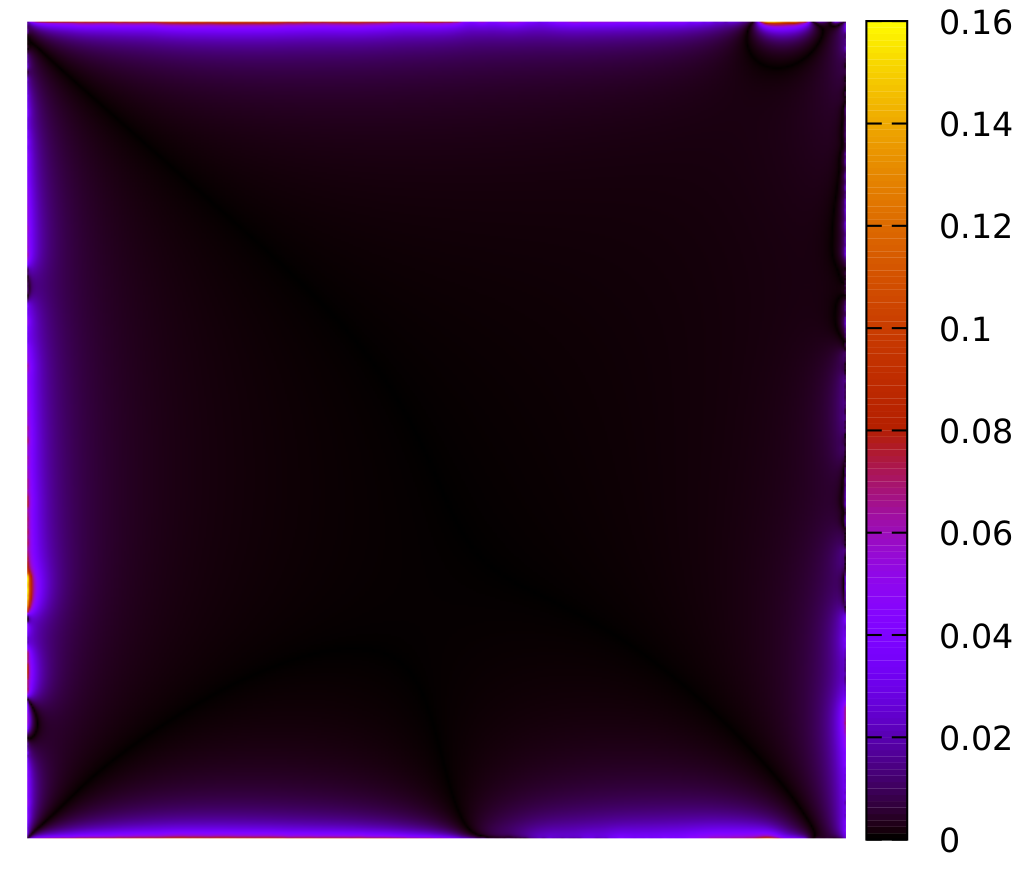}
			\caption{}
		\end{subfigure}
	\end{minipage}
	\caption{(a) Original image, (b) image corrupted with Gaussian noise, (c)
		denoised image using our method, and (d) the image denoised with the
		spectral method from \cite{AntilBartels2017}. We chose $s = 0.42$ and
		$\alpha = 10\cdot2\pi$. The difference of the image denoised with our
		method and denoised with the spectral method of \cite{AntilBartels2017}
		(see \cref{fig:g_denoised_u}) is shown in (e). The differences clearly
		concentrate at the boundary of the images as one would expect.
		}
	\label{fig:g_denoised_u}
\end{figure}

\section{Conclusion and Future Work}
The paper introduces a novel spectral method which allows efficient application of the fractional Laplacian in $\mathcal{O}(N\log(N))$ operations as well as a solution algorithm for fractional partial differential equations with Dirichlet exterior conditions. The proposed method works in both $2d$ and $3d$. We have further shown the effectiveness of the method in two applications: a fractional Allen-Cahn equation and an image denoising problem.  
The method works on arbitrary domains, for instance we have done computations on the ever popular L-shape domain. One potential limitation of our approach is that  
we can work only on uniform grids, a higher number of unknowns maybe required especially when the solution is expected to have
have singularities. Regardless, the application of our discrete operator still retains its $\mathcal{O}(N\log(N))$ complexity.

This work opens up new opportunities for problems where nonlocal operators such as fractional Laplacians appear, especially in $3d$. There are a number of open questions which are a matter of current investigation. In particular, 
\textit{(i)} How to extend the proposed method to other exterior conditions such as Neumann or Robin and how to handle nonzero exterior conditions; 
\textit{(ii)} a complete numerical analysis of the proposed method is currently missing; 
\textit{(iii)} we have applied the proposed method to both linear elliptic and nonlinear parabolic (Allen-Cahn) equations, it will also be interesting to carry out analysis in the nonlinear setting of Allen-Cahn; 
\textit{(iv)} it will be interesting to apply the proposed method to equilibrium problems such as variational inequalities and PDE constrained optimization problems.

\appendix
\section{Implementation details and computational complexity} \label{sub:algorithms}
In this section, we provide more details on the implementation of the algorithms.
The indices in the following computations will be chosen such that they fit the indices
of usual FFT implementations. All our implementations are written in \verb|C++| 
and rely on the FFTW library \cite{FFTW05}.
Computing the discrete solution of the Dirichlet-exterior value problem \cref{eq:bvp}
essentially consists of two steps,
\begin{itemize}
	\item[--] setup the discrete operator $\Phi^N$,
	\item[--] solve the (discrete) system $S_\Omega\Phi^N S_\Omega^T\u  = S_\Omega\f$.
\end{itemize}
Regarding the first step, we do not actually calculate $\Phi^N$, but its DFT $\hat\Phi^N$
following the procedure described in ~\cref{sub:setting_up_the_conv_kernel}. Further details are provided in ~\cref{alg:setup_phi_hat}.
\begin{algorithm}[h!]
	\caption{Calculation of the convolution kernel}\label{alg:setup_phi_hat}
	\begin{algorithmic}[1]
		\Function{calc\_PHI\_hat}{$N$, $s$, $d$}
			\State $\hat\Phi\gets $complex array of size $(2N)^d$, filled with $0$
			\For{$(x_i, \alpha_i)\in Q$}
				\State $c_1\gets $empty real array of size $(2N)^d$
				\State $c_1\gets $empty real array of size $()^d$
				\For {$j\in{\I}_{2N}$}
					\State $c_1[j]\gets |j-N\vec 1+x_i|^{2s}$
					\State $c_2[j]\gets Y_d(-\pi/N * (j-N\vec 1 + x_i))$
				\EndFor
				\State $C_1 \gets $ \Call{FFT}{$c_1$}\label{alg:setup_phi_hat:lfft1}
				\State $C_2 \gets $ \Call{FFT}{$c_2$}\label{alg:setup_phi_hat:lfft2}
				\State $C\gets C_1 * C_2$
				\State $c\gets$ \Call{IFFT}{$C$}\label{alg:setup_phi_hat:lfft3}
				\For {$k\in{\I}_{2N}$}
					\State $E_k\gets (2\pi)^{-d}* (\pi/N)^{d+2*s}*N^{2s}*\Call{exp}{\i\pi(k_1 + \cdots + k_d)}$
					\State $\hat\Phi[k]\gets\hat\Phi[k] + \alpha_i*E_k*c[k]$
				\EndFor 
			\EndFor
			\State\Return{$\hat\Phi$}
		\EndFunction
	\end{algorithmic}
\end{algorithm}
The arithmetic operations $*, +$ are meant component-wise if applied to arrays.
The computationally most demanding part in \cref{alg:setup_phi_hat} is
the 3-fold execution of the FFT-algorithm in lines
\ref{alg:setup_phi_hat:lfft1}, \ref{alg:setup_phi_hat:lfft2} and
\ref{alg:setup_phi_hat:lfft3} of \cref{alg:setup_phi_hat}, which are needed to compute the inner sum on the
right-hand side of \cref{eq:quadrature_rules_phi_hat}, i.e., the convolution.
These FFTs have to be executed for each of the quadrature points, i.e.,
$N_Q$-times and they are of size $(2N)^d$. However, this is done for all the
$k\in\I_{2N}^d$ at a total cost of $\mathcal O((2N)^d\log((2N)^d))$ which is
still small compared to a naive implementation at cost $\mathcal
O(((2N)^d)^2)$. 

For the second step, i.e., the solution of the fractional PDE, we use the conjugate gradient (CG) method. The method
solves a linear system
\[
	\mathbf A\u = \mathbf b,\quad \mathbf b\in\R^N, \mathbf A\in\R^{N\times N}
	\text{invertible symmetric and positive-definite}
\]
via successive applications of the matrix $\mathbf A$ instead of solving it directly.
Consequently, the algorithm is fast if the application of the
operator $\mathbf A$ can be computed efficiently. This is the case in our setting, since we need to compute the application (convolution) of $\Phi^N$ to a vector $\u$.
As mentioned above, we do so by using FFT based algorithms in order to
reduce the computational complexity. We have to introduce some padding in order to
apply the zero-padding convolution instead of circular convolution as the FFT based
algorithm would normally do. The details are provided in \cref{alg:apply_phi_hat}.
\begin{algorithm}[h!]
	\caption{Application of $\Phi^N$}\label{alg:apply_phi_hat}
	\begin{algorithmic}[1]
		\Function{apply\_PHI}{$\u$, $\hat\Phi^N$}
			\State $\bar\u\gets $complex array of size $(2N)^d$, filled with $0$
			\For {$k\in{\I}_{N}$}
				\State $\bar\u[k+N\vec 1] \gets\u[k]$
			\EndFor
			\State $\hat{\bar\u} \gets\Call{fft}{\bar u}$
			\For {$k\in{\I}_{2N}$}
				\State $\hat{\bar\u}[k] \gets \hat{\bar\u}[k] * \hat\Phi[k]$
			\EndFor
			\State $\bar\f\gets \Call{ifft}{\hat{\bar\u}}$
			\State $\f\gets $ empty real array of size $N^d$
			\For {$k\in{\I}_{N}$}
				\State $\f[k] \gets \bar\f[k]$
			\EndFor
			\State\Return{$\f$}
		\EndFunction
	\end{algorithmic}
\end{algorithm}

If the domain in the exterior value problem is the full cube $[0;1)^d$, we can simply
use the conjugate gradient method 
with the operator $\Phi^N$ using the efficient application described in
\cref{alg:apply_phi_hat}. If we want to restrict the exterior value
problem to a domain $\Omega\subsetneq[0;1)^d$, we use the strategy described in
\cref{sub:solving_the_system} and further summarized in \cref{alg:apply_phi_hat_omega}.
\begin{algorithm}[h!]
	\caption{Application of $\Phi$ with restriction to $\Omega\subset[0;1)^d$}\label{alg:apply_phi_hat_omega}
	\begin{algorithmic}[1]
		\Function{apply\_PHI\_Omega}{$\u$, $\hat\Phi^N$}
			\State $\mathbf S\gets $ empty real array of size $N^d$
			\For {$k\in{\I}_{N}$}
				\If{$k/N\in\Omega$}
					\State$\mathbf S[k]\gets 1$
				\Else
					\State$\mathbf S[k]\gets 0$
				\EndIf
			\EndFor
			\For {$k\in{\I}_{N}$}
				\State$\u[k] \gets \u[k]*\mathbf S[k]$
			\EndFor
			\State $\f\gets \Call{apply\_PHI}{\u, \hat\Phi^N}$
			\For {$k\in{\I}_{N}$}
				\State $\f[k] \gets \f[k]*\mathbf S[k]$
			\EndFor
			\State\Return{$\f$}
		\EndFunction
	\end{algorithmic}
\end{algorithm}
Both the operator $\Phi^N$ and the operator restricted to a smaller area can be applied to a vector $\u$ at cost $\mathcal{O}((2N)^d\log((2N)^d))$ which is substantially less then the cost $\mathcal{O}(((2N)^d)^2)$ of the naive
implementation.

\bibliographystyle{alpha}
\bibliography{bibliography}

\end{document}